\def\mathcal#1{\mathscr{#1}}
\def\con{\subseteq}
\def\from{\colon}
\newcommand\reals{\mathbb R}
\newcommand\CDH{\ensuremath{\mathsf{CDH}}}
\newcommand\cont{\mathfrak c}
\newcommand\CH{\ensuremath{\mathsf{CH}}}
\newcommand{\darrow}{\mathbb{A}}
\newcommand{\pair}[1]{\langle #1 \rangle}
\newcommand{\I}{\mathcal{I}}
\newcommand{\cantorset}{{2}\sp\omega}
\newcommand{\cantortree}{{2}\sp{<\omega}}
\newcommand{\Z}{\mathbb{Z}}
\newtheorem{theorem}{Theorem}[section]
\newtheorem{corollary}[theorem]{Corollary}
\newtheorem{proposition}[theorem]{Proposition}
\newtheorem{lemma}[theorem]{Lemma}
\theoremstyle{remark}
\newtheorem{question}[theorem]{Question}
\theoremstyle{definition}
\newtheorem{definition}[theorem]{Definition}
\def\leukfrac#1/#2{\leavevmode
               \kern.1em
                \raise.9ex\hbox{\the\scriptfont0 ${}_#1$}
                \hskip -1pt\kern-.1em
                /\kern-.15em\lower.10ex\hbox{\the\scriptfont0 ${}_#2$}}
\def\diam{\mathop{\operator@font diam}\nolimits}
\def\hrusak{Hru{\v{s}}{\'a}k}
\def\zamora{Zamora Avil{\'e}s}
\def\pelz{Pe{\l}czy\'{n}ski}
\begin{document}
\title[$\lambda$-sets]{Countable dense homogeneity and $\lambda$-sets}

\author[Hern\'andez-Guti\'errez]{Rodrigo Hern\'andez-Guti\'errez${}^1$}

\address{${}^1$ Department of Mathematics and Statistics, 
York University, Toronto, ON M3J 1P3, Canada} 

\email{rodhdz@yorku.ca}

\thanks{The first-listed author would like to thank the Centro de Ciencias Matem\'aticas at Morelia for
its support during his PhD studies there}

\author[Hru\v{s}\'ak]{Michael Hru\v{s}\'ak${}^2$}

\address{${}^2$Centro de Ciencias Matem\'aticas\\
UNAM\\
A.P. 61-3, Xangari, Morelia, Michoac\'an\\
58089, M\'exico}

\email{michael@matmor.unam.mx}

\thanks{{The second-listed author was supported by a PAPIIT grant IN 102311 and CONACyT grant 177758.}}

\author[van Mill]{Jan van Mill${}^3$}
\address{${}^3$Faculty of Sciences,
VU University Amsterdam, De
Boelelaan 1081A, 1081 HV Amsterdam, The Netherlands}

\address{Faculty of Electrical Engineering, Mathematics and Computer Science, TU
Delft, Postbus 5031, 2600 GA Delft, The Netherlands}

\address{Department of Mathematical Sciences, University of South Africa,
P. O. Box 392,
0003 Unisa,
South Africa}
\email{j.van.mill@vu.nl}

\thanks{The third-listed author is pleased to thank the Centro de Ciencias Matem\'aticas at Morelia for generous hospitality and support.}

\date{\today}

\keywords{countable dense homogeneous, $\lambda$-set}

\subjclass[2010]{54H05, 03E15, 54E50}

\begin{abstract} We show that all sufficiently nice $\lambda$-sets are countable dense homogeneous (\CDH).
From this fact we conclude that for every uncountable cardinal $\kappa \le \mathfrak{b}$ there is a countable dense homogeneous metric space of size $\kappa$.
Moreover, the existence of a meager in itself countable dense homogeneous metric space of size $\kappa$ is equivalent to the existence of a
$\lambda$-set of size $\kappa$. On the other hand, it is consistent with the continuum arbitrarily large that every  {\CDH} metric space has size either
$\omega_1$ or size $\mathfrak c$. An example of a Baire {\CDH} metric space which is not completely metrizable is presented. Finally, answering a question of Arhangel'skii
and van Mill we show that that there is a compact non-metrizable {\CDH} space in ZFC.
\end{abstract}

\maketitle

\section{Introduction}\label{introduction}

A separable topological space $X$ is \emph{countable dense homogeneous} (\CDH) if, given any two countable dense subsets $D$ and $E$ of $X$, there is
a homeomorphism $f\from X\to X$ such that $f[D]=E$. This is a classical notion \cite{bennett} that can be traced back to the works of Cantor, Brouwer, Fr\'echet, and others (see \cite{arh-vm-homogeneity-review} and \cite{openprobCDH}).

Examples of
\CDH-spaces are the Euclidean spaces, the Hilbert cube and the Cantor set. In fact, every strongly locally homogeneous Polish space is \CDH, as was shown
by Bessaga and \pelz~\cite{bp:estimated}. Recall that a space $X$ is \emph{strongly locally homogeneous} if it has a basis $\mathcal B$ of open sets such that
for every $U\in \mathcal B$ and every $x,y\in U$ there is a autohomeomorphism $h$ of $X$ such that
$h(x)=y$ and $h$ restricts to the identity on $X\setminus U$. All these results are based on the completeness of the spaces involved.

Countable dense homogeneity had for a long time been studied mostly as a geometrical notion (\cite{bennett}, \cite{fitz-CDH-locconn}) and only relatively
recently set-theoretic methods entered the picture. At first the use of set theory was restricted to
constructions of non-completely metrizable {\CDH}-spaces assuming special set theoretic axioms
such as CH (\cite{FitzpatrickZhou92}, \cite{saltsman1}, \cite{saltsman2}) or versions of Martin's Axiom (\cite{CDH_MA}). In fact, it was only recently
that a ZFC example of a non-complete {\CDH}-space was given by Farah, \hrusak\ and Mart\'{\i}nez Ranero~\cite{FarahHrusakMartinez05}.
They proved that there exists a \CDH-subset of $\reals$ of size $\aleph_1$.
 The proof given in \cite{FarahHrusakMartinez05} uses forcing combined with an absoluteness argument involving infinitary logic
(the so-called Keisler Compactness Theorem), while giving no hint as to how to
produce an ``honest" ZFC proof. One of the main purposes of this paper is to provide just that.

Note that every countable \CDH-space is discrete,
hence $\aleph_1$ is the first cardinal where anything of \CDH-interest can happen.
Since $\reals$ is \CDH\ and has size $\cont$, it is an interesting problem as to what can happen
for cardinals greater than $\aleph_1$ but below $\cont$. In this paper we will address this problem too.

It is easy to see that every \CDH-space is a disjoint sum of a Baire {\CDH}-space and a meager in itself
{\CDH}-space. The example given in \cite{FarahHrusakMartinez05} is meager in itself. It is a result of Fitzpatrick and Zhou \cite{FitzpatrickZhou92} that
every meager in itself {\CDH}-space is, in fact, a $\lambda$-set. Recall that a separable metric space $X$ is a
\emph{$\lambda$-set} if every countable subset of $X$ is a relative $G_\delta$-set.
Here we show that for every $\lambda$-set there is another $\lambda$-set of the same
cardinality which is {\CDH}. This not only provides an honest construction of a separable metric {\CDH}-space which is not completely metrizable 
but also shows that there can be many distinct cardinalities
of {\CDH} metric spaces, as there is a meager in itself {\CDH} metric space of size $\kappa$ if and only if
there is $\lambda$-set of the same cardinality. In particular, there is a {\CDH} metric space of size
$\kappa$ for every $\kappa\leq \mathfrak b$.
On the other hand, we show that it is also consistent
with the continuum arbitrarily large that every meager in itself {\CDH} metric space has size
$\omega_1$ while every Baire {\CDH} metric space has size $\mathfrak c$.

We present a ZFC construction of a Baire {\CDH} metric space which is not completely metrizable, thus providing a very
different absolute example of a {\CDH} metric space which is not completely metrizable. The space is the
complement (in a completion) of a carefully chosen $\lambda$-set (in fact, a $\lambda '$-set).

Finally,  we deal with the existence of non-metrizable {\CDH} compact spaces. We again use  $\lambda '$-sets to define linearly ordered non-metrizable  {\CDH} 
compacta (variants of the \emph{double arrow} space).
This provides the first ZFC examples of non-metrizable {\CDH}-compacta and answers a question of Arhangel'skii and van Mill \cite{arh-vm-homogeneity-review}.

\section{$\lambda$-sets}

A \emph{$\lambda$-set} is a subset $X$ of $2^\omega$ such that every countable subset of $X$ is relative $G_\delta$. This notion is due to Kuratowski
\cite{kuratowski-lambda}. The existence of uncountable $\lambda$-sets was proved by Lusin \cite{lusin-lambda} and was later improved by Rothberger \cite{rothberger-lambda}:
\emph{There exist $\lambda$-sets of size $\kappa$ for every $\kappa \le \mathfrak{b}$}. A subset $X$ of $2^\omega$ is a \emph{$\lambda'$-set} if for every countable
subset of $Y\subseteq 2^\omega$,  $Y$ is relative $G_\delta$ in $X\cup Y$. Rothberger \cite{rothberger-lambda} has also shown that there is a $\lambda$-set of size
$\mathfrak{b}$, which is not a $\lambda'$-set, while every set of size less than $\mathfrak b$ is a $\lambda$-set, hence also a $\lambda'$-set.
Sierpi\'nski \cite{sierpinski-lambda} noted that a union of countably many $\lambda'$-sets is a $\lambda'$-set.

Every $\lambda$-set is meager in itself. Topologically, $\lambda$-sets are characterized as follows: they are precisely the zero-dimensional spaces
having the property that all countable sets are  $G_\delta$. We will call such spaces $\lambda$-sets as well.

\begin{lemma}\label{prodlamb}
Let $X$ and $Y$ be $\lambda$-sets. Then so is $X\times Y$.
\end{lemma}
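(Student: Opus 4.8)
The plan is to use the topological characterization of $\lambda$-sets stated just above the lemma: since $X$ and $Y$ are zero-dimensional separable metric spaces, so is $X\times Y$ (equivalently, $X\times Y$ embeds in $2^\omega\times 2^\omega\cong 2^\omega$), and therefore it suffices to prove that \emph{every countable subset of $X\times Y$ is a relative $G_\delta$}. The temptation is to attack an arbitrary countable $C\con X\times Y$ directly, but its complement in $X\times Y$ is co-countable rather than countable, so there is no cheap way to see $C$ itself as $G_\delta$. The key move I would make is instead to first thicken $C$ to a product of its projections and handle that, then descend.

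So let $C=\{(x_n,y_n):n\in\omega\}$ be countable, and set $A=\{x_n:n\in\omega\}\con X$ and $B=\{y_n:n\in\omega\}\con Y$, both countable. Because $X$ is a $\lambda$-set, $A$ is a relative $G_\delta$ in $X$, say $A=\bigcap_n U_n$ with each $U_n$ open in $X$; likewise $B=\bigcap_n V_n$ with $V_n$ open in $Y$. Then $A\times B=\bigcap_n (U_n\times V_n)$ is a countable intersection of open subsets of $X\times Y$, hence $A\times B$ is a relative $G_\delta$ in $X\times Y$.

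Now I would exploit the fact that $A\times B$ is itself a \emph{countable} metric space. In any metric (indeed $T_1$) space a countable set is $F_\sigma$, being a countable union of closed singletons; applying this inside $A\times B$, the set $(A\times B)\setminus C$ is countable and therefore $F_\sigma$ in $A\times B$, so $C$ is a relative $G_\delta$ in $A\times B$. Finally I would invoke transitivity of the $G_\delta$ relation: if $S$ is $G_\delta$ in a subspace $Z$ which is $G_\delta$ in $W$, then $S=Z\cap H$ for some $G_\delta$ set $H\con W$, and the intersection of the two $G_\delta$ sets $Z$ and $H$ is again $G_\delta$ in $W$. Taking $S=C$, $Z=A\times B$, $W=X\times Y$ yields that $C$ is a relative $G_\delta$ in $X\times Y$, completing the proof.

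The argument is essentially soft, so there is no serious obstacle; the only point requiring care is resisting the urge to work with $C$ directly and instead passing to $A\times B$, after which the two ingredients actually doing the work are the product-of-$G_\delta$'s computation and the observation that \emph{every} subset of a countable metric space is automatically $G_\delta$. One should double-check the harmless compatibility detail that $X,Y\con 2^\omega$ makes $X\times Y$ genuinely a subspace of a copy of $2^\omega$, so that the topological characterization applies and the conclusion is literally ``$X\times Y$ is a $\lambda$-set'' in the sense of the definition.
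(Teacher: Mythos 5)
Your proof is correct, but it takes a genuinely different route from the paper's. The paper works directly with complements: writing $\pi\colon X\times Y\to X$ for the projection, it decomposes $(X\times Y)\setminus C$ into the cylinder $(X\setminus \pi[C])\times Y$, which is $F_\sigma$ because $X$ is a $\lambda$-set, together with the countably many sets $(\{p\}\times Y)\setminus C$ for $p\in\pi[C]$, each of which is $F_\sigma$ in the closed fiber $\{p\}\times Y\cong Y$ because $Y$ is a $\lambda$-set; the union of all these pieces is $F_\sigma$, so $C$ is relatively $G_\delta$. You instead thicken $C$ to the envelope $A\times B$ of its two projections, check that this envelope is $G_\delta$ in $X\times Y$ (a product of relative $G_\delta$-sets, via $A\times B=\bigcap_n(U_n\times V_n)$), observe that every subset of a countable $T_1$ space is relatively $G_\delta$, and finish by transitivity of the relative $G_\delta$ property. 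Both arguments are sound and equally soft. The paper's fiberwise decomposition is slightly more economical: it uses the $\lambda$-property of $X$ only through one projection, needs no transitivity lemma, and treats the factors asymmetrically. Your envelope argument is symmetric in the factors, and its pattern---replace a countable set by a canonical countable $G_\delta$ superset inside which every subset is automatically $G_\delta$---is a clean, reusable trick. It also pinpoints exactly why the lemma cannot extend to infinite products: for $C\subseteq\prod_n X_n$ the envelope $\prod_n A_n$ of the projections is still $G_\delta$, but it need no longer be countable (it can even be a copy of $2^\omega$), so the final step collapses; this is consistent with the fact that $2^\omega$ is a countable product of (finite, hence) $\lambda$-sets but is not itself a $\lambda$-set.
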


\begin{proof}
Let $A\con X\times Y$ be countable. Let $\pi\from X\times Y\to X$ denote the projection.
Then $X\setminus \pi[A]$ is $F_\sigma$ by assumption, hence so is $(X\setminus \pi[A]) \times Y$.
Since for every $p\in \pi[A]$ we have that $(\{p\}\times Y)\setminus A$ is $F_\sigma$ in $\{p\}\times Y$
and hence in $X\times Y$, we are done.
\end{proof}

\begin{lemma}
If a space $X$ has a countable closed cover by $\lambda$-sets, then $X$ is a $\lambda$-set.
\end{lemma}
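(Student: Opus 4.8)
The plan is to verify directly the two halves of the topological characterization of $\lambda$-sets recalled just above: that $X$ is zero-dimensional (and separable metrizable) and that every countable subset of $X$ is a relative $G_\delta$. Write $X=\bigcup_{n\in\omega}F_n$, where each $F_n$ is closed in $X$ and is a $\lambda$-set; in particular each $F_n$ is zero-dimensional, and separability and metrizability of $X$ come from the standing assumptions on the spaces under consideration.

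The zero-dimensionality I regard as the routine half: I would simply invoke the countable closed sum theorem for dimension, which says that a separable metrizable space that is the union of countably many closed zero-dimensional subspaces is itself zero-dimensional. Since each $F_n$ is zero-dimensional, this yields $\dim X\le 0$ immediately.

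The heart of the matter is the $G_\delta$ condition, and the key decision I would make is to argue with complements rather than directly. Let $A\con X$ be countable; I want to show that $X\setminus A$ is $F_\sigma$ in $X$. I would decompose
$$X\setminus A=\bigcup_{n\in\omega}(F_n\setminus A).$$
For each $n$, the set $A\cap F_n$ is a countable subset of the $\lambda$-set $F_n$, so $F_n\setminus A=F_n\setminus(A\cap F_n)$ is $F_\sigma$ in $F_n$. Because $F_n$ is closed in $X$, a closed subset of $F_n$ is closed in $X$, and hence a set that is $F_\sigma$ in $F_n$ is also $F_\sigma$ in $X$. Thus each $F_n\setminus A$ is $F_\sigma$ in $X$, and a countable union of $F_\sigma$ sets is again $F_\sigma$. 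Therefore $X\setminus A$ is $F_\sigma$, i.e. $A$ is a relative $G_\delta$, as required.

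The one genuinely tempting pitfall, and what I would flag as the main obstacle, is the naive attempt to establish the $G_\delta$ property directly: one writes $A=\bigcup_n(A\cap F_n)$ and observes that each $A\cap F_n$ is $G_\delta$ in $F_n$, hence (using that closed subsets of a metrizable space are themselves $G_\delta$) $G_\delta$ in $X$. But a countable union of $G_\delta$ sets need not be $G_\delta$, so this argument stalls at precisely the last step. Passing to complements repairs it, since both relevant closure properties --- ``$F_\sigma$ in a closed subspace is $F_\sigma$ in $X$'' and ``a countable union of $F_\sigma$ sets is $F_\sigma$'' --- run in the favorable direction. That asymmetry is essentially the whole content of the lemma.
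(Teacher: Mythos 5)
Your proof is correct and follows essentially the same route as the paper's: zero-dimensionality via the Countable Closed Sum Theorem, then passing to complements so that the two favorable closure properties of $F_\sigma$ sets (stability under countable unions, and ``$F_\sigma$ in a closed subspace implies $F_\sigma$ in $X$'') carry the argument. The only immaterial difference is that the paper first refines the cover to a \emph{disjoint} countable family of relatively closed $\lambda$-sets before running the same complementation argument, a step your version shows is unnecessary.
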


\begin{proof}
First observe that $X$ is zero-dimensional by the Countable Closed Sum Theorem (\cite[1.3.1]{engelking:dim:nieuw}).
Since subspaces of $\lambda$-sets are $\lambda$-sets, we may assume by zero-dimensionality that $X$ is covered by a disjoint
countable family $\mathcal{L}$ of relatively closed $\lambda$-sets. Now, if $A\con X$ is countable, then for every $L\in \mathcal{L}$, $L\setminus A$
is $F_\sigma$ in $L$ and hence in $X$. Hence $X\setminus A$ is $F_\sigma$.
\end{proof}

Meager in themeselves \CDH-subspaces of $\reals$ are $\lambda$-sets, as can be seen from the following observation. If $X$ is first category,
then it contains a countable dense set $D$ which is $G_\delta$ in $X$. This simple but useful fact, was proved by Fitzpatrick and Zhou~\cite{FitzpatrickZhou92}
(and was put to good use in \hrusak\ and \zamora~\cite{HrusakZamora05}). Every countable subset $A$ of $X$ can be extended to a countable dense subset of $X$
which consequently must be $G_\delta$ since $D$ is $G_\delta$, and $X$ is \CDH. Hence $A$ is $G_\delta$.

\smallskip

For 
$x,y \in  2^\omega$ we say that
$$x\sim y\text{ if }\exists m,n \in\omega \  \forall k\in\omega \ x(m+k)=y(n+k).$$
The relation $\sim$, known as the \emph{tail equivalence},  is a Borel equivalence relation on $2^\omega$ with countable and dense equivalence classes.
Given a set $X\subseteq 2^\omega$ we define its \emph{saturation} $X^*=\{y\in 2^\omega: \exists x\in X \  y\sim x\}$.
We will call a set $X\subseteq 2^\omega$ \emph{saturated} if it is saturated with respect to $\sim$, i.e. if $X=X^*$.

Given $s\in 2^{<\omega}$, we denote by $[s]=\{x\in 2^\omega: s\subseteq x\}$ the basic clopen set (the \emph{cone}) determined by $s$.
Given $s,t \in 2^{<\omega}$ we let $h_{s,t}: [s]\rightarrow [t]$ be defined by $h_{s,t}(s^{\smallfrown}x)= t^{\smallfrown}x$, for every $x\in 2^\omega$.
Then $h_{s,t}$ is a natural (even monotone with respect to to the lexicographic order on  $2^\omega$) homeomorphism between the clopen sets $[s]$ and $[t]$.
The crucial property we use is that $h_{s,t}$ \emph{respects} the equivalence relation $\sim$: $h_{s,t}(x)\sim x$ for all $x\in[s]$.

Next we see that there are many saturated $\lambda$-sets and $\lambda'$-sets.

\begin{lemma}\label{lambda}
${}$
\begin{enumerate}
\item If $X$ is a $\lambda'$-set  then $X^*$ is a $\lambda'$-set.
\item For every $\lambda$-set $X\subseteq 2^\omega$ there is a saturated $\lambda$-set $Y$ of the same cardinality.
\end{enumerate}
\end{lemma}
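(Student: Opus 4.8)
The plan is to handle the two parts separately, in both cases exploiting the fact that a saturation decomposes into countably many homeomorphic copies governed by the maps $h_{s,t}$. The key identity, valid because each $h_{s,t}$ respects $\sim$, is
\[
X^*=\bigcup_{s,t\in 2^{<\omega}} h_{s,t}[X\cap[s]],
\]
since $y\sim x$ with $x\in X$ means precisely that $y$ and $x$ share a common tail, i.e.\ $y=h_{s,t}(x)$ for suitable $s,t$.

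For (1), I would first record the routine facts that a subset of a $\lambda'$-set is again a $\lambda'$-set, and that each $h_{s,t}$ carries a $\lambda'$-subset of $[s]$ onto a $\lambda'$-subset of $[t]$ (given a countable $Y\subseteq 2^\omega$, split it by the clopen set $[t]$ and transport the part lying in $[t]$ back through $h_{t,s}$). Each piece $h_{s,t}[X\cap[s]]$ is then a homeomorphic copy of the $\lambda'$-set $X\cap[s]$, hence a $\lambda'$-set, so by the displayed identity $X^*$ is a countable union of $\lambda'$-sets. Sierpi\'nski's observation that a countable union of $\lambda'$-sets is a $\lambda'$-set finishes (1).

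For (2) the same decomposition is available, but the pieces are now only $\lambda$-sets, and $\lambda$-sets are \emph{not} closed under even finite unions, so Sierpi\'nski's theorem is unavailable and the lemma on countable \emph{closed} covers does not apply directly (the pieces are not closed in $X^*$). I would instead try to show $X^*$ is a $\lambda$-set directly: every countable $A\subseteq X^*$ is $G_\delta$ in $X^*$. Since countable sets are automatically $F_\sigma$, I may enlarge $A$ to its (still countable) saturation and assume $A$ is saturated; then $A=B^*$ with $B=A\cap X$ countable, and after replacing $B$ by all $X$-points of the classes it meets one gets $X^*\setminus A=(X\setminus B)^*$. Writing $B=X\cap G$ with $G$ a $G_\delta$ (possible as $X$ is a $\lambda$-set), $X\setminus B$ becomes the increasing union of relatively closed sets $X\cap E_k$ with $E_k$ closed in $2^\omega$. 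The device I would exploit is that the saturation of a \emph{closed} (hence compact) set is $F_\sigma$ in $2^\omega$: with $\sigma$ the shift, $E^*=\bigcup_{m,n}(\sigma^m)^{-1}[\sigma^n[E]]$, and every term is closed because $\sigma^n[E]$ is compact and $\sigma^m$ is continuous.

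Intersecting these $F_\sigma$ sets with $X^*$ yields an $F_\sigma$ set $W$ of $X^*$ containing $(X\setminus B)^*$, and a short class-chasing argument (using that $B$ collects whole $X$-fibres) shows the overshoot $W\setminus(X\setminus B)^*$ is contained in the countable set $A$, whence $X^*\setminus A=W\setminus A$. The hard part will be the final step: to conclude that $W\setminus A$, an $F_\sigma$ minus a countable set, is again $F_\sigma$, I need each closed piece of $W$ to be a $\lambda$-set so that the countable correction can be removed as a relative $G_\delta$. Up to the homeomorphisms $h_{s,t}$ these pieces are of the form $K\cap X^*$ with $K\subseteq 2^\omega$ compact, so the whole argument reduces to showing that the intersection of $X^*$ with a compact set is a $\lambda$-set; this is the genuine obstacle, and it is exactly here that the compactness of the $E_k$ must be used. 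I note that for $|X|<\mathfrak b$ this difficulty evaporates, since then $X$ is a $\lambda'$-set and part (1) already produces the saturated $\lambda'$-set (hence $\lambda$-set) $X^*$ of the same cardinality.
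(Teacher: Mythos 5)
Your part (1) is correct and is essentially the paper's own argument: you decompose $X^*=\bigcup_{s,t\in 2^{<\omega}}h_{s,t}[X\cap[s]]$ into countably many $\lambda'$-pieces (the verification that $h_{s,t}$ transports $\lambda'$-sets, via splitting a countable witness by the clopen set $[t]$, is fine) and then quote Sierpi\'nski.

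Part (2), however, contains a genuine gap, and you have located it yourself: your entire argument funnels into the unproved claim that $X^*\cap K$ is a $\lambda$-set for every compact $K\subseteq 2^\omega$, and taking $K=2^\omega$ shows this claim is just the original assertion again, so the reduction is circular. The intermediate steps (saturating $A$, the identity $X^*\setminus A=(X\setminus B)^*$, the fact that $E^*=\bigcup_{m,n}(\sigma^m)^{-1}[\sigma^n[E]]$ is $F_\sigma$ for closed $E$) are all correct, but they only show that $X^*\setminus A$ is an $F_\sigma$ \emph{modulo} a countable correction inside each relatively closed piece, which is where the problem started. The underlying misstep is the choice of target: the lemma does not ask that $X^*$ itself be a $\lambda$-set (nothing in the hypothesis controls how $X$ sits relative to $\sim$, and this is exactly the danger of countable non-closed unions of $\lambda$-sets that you flag); it only asks for \emph{some} saturated $\lambda$-set $Y$ of the same cardinality. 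The paper exploits this freedom: by Silver's theorem there is an embedding $\varphi\colon 2^\omega\to 2^\omega$ with $\varphi(x)\not\sim\varphi(y)$ for distinct $x,y$, and one sets $Y=\varphi[X]^*$. The point of the Silver embedding is that every $\sim$-class of $Y$ meets the compact set $\varphi[2^\omega]$ in exactly one point, and that point lies in $\varphi[X]$; consequently $h_{s,t}[\varphi[X]\cap[s]]=Y\cap h_{s,t}[\varphi[2^\omega]\cap[s]]$, so each piece of your decomposition becomes the trace on $Y$ of a compact set, hence \emph{relatively closed} in $Y$, and the paper's lemma on countable closed covers by $\lambda$-sets applies; since $\sim$-classes are countable, $|Y|=|X|$. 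This is precisely the compactness mechanism you were groping for with the sets $E_k$, but applied to a re-embedded copy of $X$ for which it actually works. Your closing observation for $|X|<\mathfrak b$ is correct but covers only the case in which part (1) already yields the conclusion, and in particular misses the sets of size $\mathfrak b$ (e.g.\ Rothberger's $\lambda$-set that is not a $\lambda'$-set) for which the lemma is most needed.
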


\begin{proof}
 The first clause follows directly from Sierpi\'nski's observation that a union of countably many $\lambda'$-sets is a $\lambda'$-set.

To prove the second clause, fix first an embedding $\varphi:2^\omega\rightarrow 2^\omega$ such that for distinct $x,y\in 2^\omega$
$\varphi(x)\not\sim\varphi(y)$. Such an embedding exists by a theorem of Silver (see \cite{silver-equiv-classes}). Then let $Y= \varphi[X]^*$. Then $Y$ is
a $\lambda$-set by the previous lemma.
\end{proof}

\section{Knaster-Reichbach covers}

Let $X$ and $Y$ be zero-dimensional spaces, let $A\con X$ be closed and nowhere dense in $X$ and let $B\con Y$ be closed and nowhere dense in $Y$. Moreover,
let $h\from A\to B$ be a homeomorphism. A triple $\langle \mathcal{U}, \mathcal{V}, \alpha\rangle$ is called a \emph{Knaster-Reichbach cover}, or
\emph{KR}-cover,   for $\langle X\setminus A, Y\setminus B, h \rangle$ if the following conditions are satisfied:
\begin{enumerate}
\item $\mathcal{U}$ is a partition of $X\setminus A$ by nonempty clopen subsets of $X$,
\item $\mathcal{V}$ is a partition of $Y\setminus B$ by nonempty clopen subsets of $Y$,
\item $\alpha\from \mathcal{U}\to \mathcal{V}$ is a bijection,
\item if for every $U\in \mathcal{U}$, $g_U\from U\to \alpha(U)$ is a bijection, then the combination mapping
$\tilde h = h \cup \bigcup_{U\in \mathcal{U}} g_U$ is continuous at all points of $A$,
and its inverse $\tilde h^{-1}$ is continuous at all points of $B$.
\end{enumerate}

KR-covers were used by Knaster and Reibach~\cite{KnasterReichbach} to prove homeomorphism extension results in the class of all zero-dimensional spaces.
The term KR-cover was first used by van Engelen~\cite{fons86} who proved their existence in a general setting.

\begin{lemma}[{\cite[Lemma 3.2.2]{fons86}}]\label{fons}
Let $X$ and $Y$ be zero-dimensional separable metrizable spaces, and let $A$ and $B$
be non-empty closed nowhere dense subspaces of $X$ and $Y$, respectively. If
$h\from A \to B$ is a homeomorphism, then there exists a KR-cover for $\langle X\setminus A,Y\setminus B,h\rangle $.
\end{lemma}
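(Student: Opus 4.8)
The plan is to translate the opaque condition (4) into a concrete metric requirement and then build the partitions to meet it. Fix compatible metrics $d$ on $X$ and $\rho$ on $Y$, both bounded by $1$. Since the bijections $g_U$ are completely arbitrary, the only way to force $\tilde h$ to be continuous at a point $a\in A$ is to guarantee that pieces of $\mathcal U$ lying close to $a$ have their $\alpha$-images contained in a small ball around $h(a)$: concretely, for every $a\in A$ and $\varepsilon>0$ there should be $\delta>0$ such that whenever $U\in\mathcal U$ meets the $\delta$-ball about $a$ one has $\alpha(U)\con B_\rho(h(a),\varepsilon)$, since then $\rho(\tilde h(x),h(a))<\varepsilon$ for every $x\in U$ regardless of $g_U$. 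Continuity of $\tilde h^{-1}$ at $b\in B$ is the symmetric statement with the roles of $(X,A,d)$ and $(Y,B,\rho)$, and of $\alpha$ and $\alpha^{-1}$, interchanged.

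The construction itself is a simultaneous back-and-forth refinement. By induction on $n$ I would build finite clopen partitions $\mathcal P_n$ of $X$ and $\mathcal Q_n$ of $Y$, each refining its predecessor, together with a bijection $\beta_n$ between the members of $\mathcal P_n$ meeting $A$ and the members of $\mathcal Q_n$ meeting $B$, subject to the coherence condition $h[A\cap P]=B\cap\beta_n(P)$. At stage $n+1$, using the (strong) zero-dimensionality of $A$ together with the continuity of both $h$ and $h^{-1}$, I would split the trace $A\cap P$ of each boundary piece into clopen subsets $S$ with $\diam_d S<2^{-n}$ and $\diam_\rho h[S]<2^{-n}$, realize each $S$ by a clopen subset of $X$ of $d$-diameter $<2^{-n}$ and each $h[S]$ by a clopen subset of $Y$ of $\rho$-diameter $<2^{-n}$, and let $\beta_{n+1}$ match these; the possibility of choosing such small clopen realizations around a closed set of small diameter is exactly where the strong zero-dimensionality of $X$ and $Y$ is used. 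The part of each boundary piece that misses $A$ and is separated off at this stage I would cut into clopen subsets of $X$ of diameter $<2^{-n}$, assigning to each a clopen subset, also of diameter $<2^{-n}$, of the correspondingly separated-off part of the matched piece in $Y$; a routine back-and-forth bookkeeping guarantees that every point of $X\setminus A$ and of $Y\setminus B$ is eventually separated off this way. Collecting the pieces that miss $A$ as they are created yields $\mathcal U$, collecting their assigned targets yields $\mathcal V$, and the assignment is $\alpha$; by construction $\mathcal U$ and $\mathcal V$ partition $X\setminus A$ and $Y\setminus B$ into sets clopen in $X$ and $Y$, and $\alpha$ is a bijection.

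It remains to verify the requirement from the first paragraph. Fix $a\in A$ and $\varepsilon>0$, and let $W_n\in\mathcal P_n$ be the member containing $a$. Because boundary pieces are split with image-diameter below $2^{-n}$ at each stage, $\beta_{n+1}(W_{n+1})\con\beta_n(W_n)$ and $\diam_\rho\beta_n(W_n)\to 0$, so fix $n$ with $\diam_\rho\beta_n(W_n)<\varepsilon$. As $h(a)\in B\cap\beta_n(W_n)$ we get $\beta_n(W_n)\con B_\rho(h(a),\varepsilon)$. Choosing $\delta$ with $B_d(a,\delta)\con W_n$ (possible since $W_n$ is a clopen neighbourhood of $a$), any $U\in\mathcal U$ meeting $B_d(a,\delta)$ is a member of some $\mathcal P_m$ refining $\mathcal P_n$ and meets $W_n$, hence $U\con W_n$ and so $\alpha(U)\con\beta_n(W_n)\con B_\rho(h(a),\varepsilon)$, exactly as needed. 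Continuity of $\tilde h^{-1}$ on $B$ follows by the symmetric computation using the $Y$-side mesh control.

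The main obstacle is precisely this two-sidedness. A partition of $X\setminus A$ chosen merely to shrink toward $A$ would make $\tilde h$ continuous on $A$ but could easily destroy continuity of $\tilde h^{-1}$, since pieces that stay far from $A$ might be assigned targets crowding up against $B$. The point of building $\mathcal P_n$ and $\mathcal Q_n$ together, through the single correspondence $\beta_n$ tied to $h$ and $h^{-1}$ with mesh control enforced on both sides at once, is to ensure that every matched pair $(U,\alpha(U))$ lives inside a matched pair of boundary pieces whose diameters shrink in lockstep. This makes ``approaching $A$'' and ``approaching $B$'' equivalent under $\alpha$, which is what lets the same construction serve both continuity requirements, and it is the reason the two partitions cannot be built one after the other.
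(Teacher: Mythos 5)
The paper itself gives no proof of this lemma; it is imported verbatim from van Engelen (Lemma 3.2.2 of his CWI tract), so your proposal has to be judged on its own terms. Your metric reformulation of condition (4) is correct, and your third-paragraph verification is sound as far as it goes; the genuine gap is in the construction step, namely the requirement that the off-boundary leftovers be cut into pieces of diameter $<2^{-n}$ on \emph{both} sides and then matched bijectively. This is impossible in general. Take $X=\{0\}\cup\{2^{-n}:n\in\omega\}$, $A=\{0\}$, $Y=X\cup\{n+2:n\in\omega\}$, $B=\{0\}$, and $h$ the unique homeomorphism. Every subset of $X\setminus A$ that is clopen in $X$ is finite (an infinite one would accumulate at $0\in A$), so every $X$-side leftover is finite; on the $Y$ side, any realization of $B$ of diameter $<1$ misses all the points $n+2$, so the leftover contains $\{n+2:n\in\omega\}$, and since distinct points $n+2$, $m+2$ are at distance $\geq 1$, any partition of it into pieces of diameter $<1$ has infinitely many members. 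So already at the first stage your matching cannot be carried out, and no choice of realizations avoids this. Worse, no KR-cover in which \emph{all} pieces satisfy your uniform small-diameter requirement exists for this example: infinitely many small $\mathcal{V}$-pieces must contain points $n+2$, and their $\alpha$-preimages, being finite subsets of $\{2^{-n}:n\in\omega\}$, accumulate at $A$, violating your own metric criterion. A second, related gap: you only assign to each leftover piece of $X$ ``a clopen subset'' of the matched $Y$-leftover; for $\mathcal{V}$ to be a partition of $Y\setminus B$ these targets must \emph{exhaust} that leftover, and they must do so at the same stage, because the leftover is disjoint from all later boundary pieces and never re-enters the construction — so no cross-stage ``back-and-forth bookkeeping'' can repair a deficit.

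The repair is to notice that diameter control is needed (and, in your own verification, used) only on the \emph{boundary} pieces, i.e.\ the clopen realizations of the pieces of $A\cap P$ and of $h[A\cap P]$; the diameters of pieces lying off $A$ and $B$ are irrelevant to condition (4). So match the two leftovers $P\setminus\bigcup_S W_S$ and $\beta_n(P)\setminus\bigcup_S W'_{h[S]}$ to each other as \emph{single} pieces of $\mathcal{U}$ and $\mathcal{V}$, with no diameter requirement, after arranging both to be non-empty (always possible: $P\setminus A$ is a non-empty open set, so one can reserve a non-empty clopen set disjoint from $A$ inside $P$ before choosing the realizations, and likewise in $\beta_n(P)$). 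With this change your third-paragraph argument goes through verbatim, since it only uses that partitions refine, that pieces persist once created, that $\alpha(U)$ lies inside the $Y$-side boundary piece matched to the $X$-side boundary piece from which $U$ was split off, and that the boundary-piece diameters tend to $0$ on both sides. In this corrected form the argument does prove the lemma as literally stated — note that in examples like the one above the resulting cover has pieces $U$ and $\alpha(U)$ of different cardinalities, so condition (4) holds partly by vacuity; that is unavoidable and harmless for the statement, though it means one cannot hope to always produce covers admitting the bijections $g_U$.
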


\section{{\CDH} metric spaces from $\lambda$-sets}

Perhaps the main result of this paper is the following:

\begin{theorem}\label{main-thm}
Let $X\subseteq 2^\omega$ be an uncountable saturated $\lambda$-set. Then $X$ is  a \emph{relatively} \CDH-subspace of $2^\omega$, i.e. for every
$D_0, D_1$ countable dense subsets of $X$ there is a homeomorphism $h$ of $2^\omega$ such that
\begin{enumerate}
\item $h[D_0]=D_1$, and
\item $h[X]=X$.
\end{enumerate}
\end{theorem}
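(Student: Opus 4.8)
The plan is to build the homeomorphism $h$ of $2^\omega$ by a back-and-forth recursion, realized through KR-covers, in such a way that $h$ moves every point of $X$ along its own $\sim$-class, except for the countably many points we are forced to reroute; saturation of $X$ then yields $h[X]=X$.

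\emph{Setup.} Since $X$ is saturated and nonempty it contains a full $\sim$-class, hence is dense in $2^\omega$, so $D_0,D_1$ are countable dense subsets of $2^\omega$; fix enumerations $D_0=\{a_n:n\in\omega\}$ and $D_1=\{b_n:n\in\omega\}$. Because $X$ is a $\lambda$-set, the countable sets $D_0,D_1$ are relative $G_\delta$'s, so I may write $X\setminus D_0=\bigcup_k F_k$ and $X\setminus D_1=\bigcup_k G_k$ with $F_k,G_k$ increasing, relatively closed and nowhere dense in $X$. A short computation shows that the closures $\CL{F_k},\CL{G_k}$ taken in $2^\omega$ are closed, nowhere dense, and disjoint from $D_0$, respectively $D_1$ (indeed $\CL{F_k}\cap X=F_k$ and $F_k\cap D_0=\emptyset$, so no point of $D_0$ can lie in $\CL{F_k}$).

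\emph{Construction.} I build increasing closed nowhere dense sets $A_n\con 2^\omega$, homeomorphisms $h_n\from A_n\to B_n$ onto increasing closed nowhere dense $B_n$, and KR-covers $\pair{\mathcal U_n,\mathcal V_n,\alpha_n}$ for $\pair{2^\omega\setminus A_n,2^\omega\setminus B_n,h_n}$ (which exist by Lemma~\ref{fons}), refining with mesh tending to $0$, so that at even steps the next unused $a_n$ is adjoined to the domain and at odd steps the next unused $b_n$ to the range; thus $\bigcup_n A_n=D_0$, $\bigcup_n B_n=D_1$ and $h\restriction D_0$ ends up a bijection onto $D_1$. Two features are essential. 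First, on each piece $U=[s]\in\mathcal U_n$ the intended map is a tail map $h_{s,t}$ with $\alpha_n(U)=[t]$, and refinements always copy tails, $[s\concat w]\mapsto[t\concat w]$; such maps respect $\sim$. Second, when I adjoin $a_n$ and set $h(a_n)=b$ for some unused $b\in D_1$ in the relevant image cone (available since $D_1$ is dense), I realize the forced rerouting of $a_n$ to $b$ by an annulus construction confined to a small sub-cone $C\ni a_n$ whose image is a small sub-cone $C'\ni b$, where $C$ is chosen to miss $\CL{F_k}$ and $C'$ to miss $\CL{G_k}$ for all $k\le n$ (possible because $a_n\notin\CL{F_k}$ and $b\notin\CL{G_k}$); outside $C$ the map on that cone remains a tail map, and the annuli in $C\setminus\{a_n\}$ are matched to annuli in $C'\setminus\{b\}$ by tail maps.

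\emph{Passing to the limit.} The KR-cover continuity clauses together with mesh $\to 0$ produce, in the standard way, a well-defined homeomorphism $h$ of $2^\omega$ with $h\restriction D_0$ the constructed bijection onto $D_1$, giving (1). For (2), observe that the only non-$\sim$-respecting behaviour of $h$ occurs inside the domain sub-cones $C$ and the range sub-cones $C'$. By the confinement, a fixed $x\in F_{k_0}$ can lie in such a domain sub-cone only at the finitely many steps $n<k_0$; after the last such step $x$ sits permanently inside a cone on which $h$ is a single tail map, so $h(x)\sim x$ and hence $h(x)\in X^*=X$. Since also $h(a_n)\in D_1\con X$, we get $h[X]\con X$; the symmetric argument using the $G_k$ and $h^{-1}$ gives $h^{-1}[X]\con X$, and therefore $h[X]=X$.

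The main obstacle is precisely the tension in the second feature: each $a_n$ must be sent to a prescribed point of $D_1$, which in general is not $\sim$-equivalent to $a_n$, while $h(x)\sim x$ must survive for every other point of $X$. The resolution is to confine the reroutings to sub-cones dodging the $\CL{F_k}$ and $\CL{G_k}$, which is exactly where the $\lambda$-set hypothesis enters (through the $F_\sigma$ decompositions of $X\setminus D_0$ and $X\setminus D_1$); checking that these confined reroutings can be carried out within a KR-cover while preserving both mesh control and bijectivity is the step that will require the most care.
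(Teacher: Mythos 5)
Your setup and skeleton (KR-covers with vanishing mesh, piece maps that are tail maps $h_{s,t}$, back-and-forth adjunction of the $a_n$ and $b_n$) are sound, and you correctly identify the central tension; but your mechanism for resolving it --- dodging the sets $\CL{F_k}$, $\CL{G_k}$ with small rerouting cones --- contains a genuine gap, not just a step requiring care: your two ``essential features'' are incompatible. If the map outside the rerouting cone $C$ is literally unchanged, then the partition structure forces $C'$ to be the old image $h_{s,t}[C]$: the pieces of $U\setminus C$ keep their old images, so the territory left for the annuli of $C\setminus\{a_n\}$ is exactly $h_{s,t}[C]\setminus\{b\}$. In particular $C'$ contains $h_{s,t}(a_n)$, which is a point of $X$ (by saturation) and, whenever it is not in $D_1$, lies in some $G_k$ --- possibly with $k\le n$ --- so $C'$ cannot miss $\bigcup_{k\le n}\CL{G_k}$; your justification (``possible because $b\notin\CL{G_k}$'') silently assumes $C'$ is a freely chosen cone around $b$. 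Symmetrically, at the odd steps the domain cone is forced to contain the old preimage of $b_n$, which may lie in some $F_k$ with $k\le n$. If instead you do choose $C'$ freely around $b$, then the map on $U\setminus C$ must be redefined (to some new piecewise tail map onto $\alpha_n(U)\setminus C'$), so value jumps are no longer confined to $C$: every point of the piece containing the current rerouted point jumps, and a fixed $x\in F_{k_0}$ can suffer such jumps at infinitely many stages (nothing prevents the adjoined points from accumulating at $x$). Either way, on one side the stage values fail to stabilize, and then the argument collapses: knowing that each stage value of $x$ is $\sim x$ says nothing about the limit, because $\sim$-classes are dense --- the limit can land in $2^\omega\setminus X$, which is dense since $X$ contains no copy of the Cantor set. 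So the proof of $h[X]=X$, and specifically of $h^{-1}[X]\subseteq X$, breaks down.

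The paper resolves the tension with a stronger mechanism: it does not dodge the nowhere dense sets, it \emph{freezes} them. It first enlarges $D_0,D_1$ to countable dense \emph{saturated} sets $E_0,E_1\subseteq X$ with $E_i\setminus D_i$ countable dense (these spare points absorb all forced collateral reroutings, with the target $e$ chosen so that $e\in D_1$ iff $d_n\in D_0$), and covers $Y=X\setminus(E_0\cup E_1)$ by closed nowhere dense sets $K_n$. At stage $n+1$, the set $K_n$ is absorbed into \emph{both} closed nowhere dense sets $G_{n+1}^0$ and $G_{n+1}^1$ on which all later homeomorphisms are required to agree, after arranging that the current (total) homeomorphism maps $Y$ onto $Y$ --- which is possible precisely because all piece maps are piecewise tail maps and $Y$ is saturated. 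Hence every point of $X$ receives its final value at a finite stage: points of $E$ are explicitly matched inside $E$, and points of $Y$ are frozen with a $\sim$-respecting value; stabilization is built in on both sides rather than hoped for. Your $F_k$'s are exactly the right sets to play the role of the $K_n$'s, but they must be swallowed by the closed sets on which the partial homeomorphism is permanently fixed, not avoided by the rerouting cones.
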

\begin{proof}
Start by fixing a metric on $\cantorset$. Observe that $X$, being saturated, is dense in $2^\omega$. Since $X$ is uncountable, there exists a saturated set $E_i\subset X$ such that $D_i\subset E_i$ and $E_i\setminus D_i$ is countable dense for each $i\in2$. Let $E=E_0\cup E_1$ and $Y=X\setminus E$, notice that these sets are saturated as well.
Since $X$ is a $\lambda$-set, there is a collection 
$\{K_n:n<\omega\}$ of closed nowhere dense subsets of $\cantorset$ such that $Y=X\cap(\bigcup\{K_n:n<\omega\})$. 
Enumerate $E$ as $\{d_k: k < \omega\}$.

We will contruct the homeomorphism $h$ recursively. In step $n<\omega$, we will find a homeomorphism $h_n:\cantorset\to\cantorset$ and a pair of closed nowhere dense sets 
$G_n\sp{0}$ and $G_n\sp{1}$ such that:
\begin{itemize}
\item[(a)] For $i\in 2$, $G_n\sp{i}\cup K_n\subset G_{n+1}\sp{i}$.
\item[(b)] For $i\in 2$, $E\cap G_n\sp{i}$ is finite and contains $\{d_k:k< n\}$.
\item[(c)]  $h_{n}[G_n\sp0]=G_n\sp1$, $h_n[D_0\cap G_n\sp0]=D_1\cap G_n\sp1$, $h_n[E\cap G_n\sp0]=E\cap G_n\sp1$ and $h_n[Y]=Y$.
\item[(d)] For every $k<n$, $h_n\!\!\restriction_{G_k\sp0}=h_k$.
\end{itemize}

We will also need a 
KR-cover $\langle \I_n\sp{0},\I_n\sp{1},\alpha_n\rangle$ for $\langle \cantorset\setminus G_n\sp{0},
\cantorset\setminus G_n\sp{1},h_n\!\!\restriction_{G_n\sp{0}}\rangle$ satisfying the following:

\begin{itemize}
\item[(e)] Given $i\in2$, if $I\in\I_n\sp{i}$, then $I$ has diameter $<\frac{1}{n+1}$.
\item[(f)] For each $i\in2$, $\I_{n+1}\sp{i}$ refines $\I_n\sp{i}$; that is, every element of $\I_{n+1}\sp{i}$ is contained in an element 
of $\I_n\sp{i}$.
\item[(g)] If $I\in\I_n\sp{0}$, then $h_n[I]=\alpha_n(I)$ and $h_n[I\cap Y]=\alpha_n(I)\cap Y$.
\item[(h)] Given $m<n$, if $I\in\I_{m}\sp{0}$ and $J\in\I_{n}\sp{0}$ then $J\subset I$ if and only if $\alpha_n(J)\subset\alpha_m(I)$.
\end{itemize}

In the first step of the construction, we let $G_0\sp0=G_0\sp1=\emptyset$, $\I_0\sp0=\I_0\sp1=\{\cantorset\}$, $\alpha_0=\{\pair{\cantorset,\cantorset}\}$ and 
we set $h_0$ to be the identity function. 

So assume we are in step $n+1$ for some $n\in\omega$.

First, let $G=G_n\sp0\cup K_n\cup\{d_n\}$. We may assume that $d_n\notin G_n\sp0$ and let $I_0\in\I_n\sp{0}$ be such that $d_n\in I_0$.
Now, choose $e\in E_1\cap\alpha_n(I_0)$ in such a way that $e\in D_1$ if and only if $d_n\in D_0$; this is possible by the choice of $E_0$ and $E_1$. Let $\{I\sp{0}_0(m):m<\omega\}$ be a partition of $I_0\setminus\{d_n\}$ into infinitely many
clopen subsets. Similarly, let $\{I\sp{1}_0(m):m<\omega\}$ be a partition of $\alpha_n(I_0)\setminus\{e\}$ into infinitely many clopen subsets. 
Define $\I\sp\prime=(\I\sp{0}_n\setminus\{I_0\})\cup\{I\sp{0}_0(m):m<\omega\}$ and $\beta=\alpha_n\cup\{\pair{I\sp{0}_0(m),I\sp{1}_0(m)}:m<\omega\}$.

Let $U\in\I\sp\prime$ and $V=\beta(U)$. Every clopen subset of $\cantorset$ is a finite union of pairwise disjoint sets of the form $[s]$ 
for $s\in\cantortree$. Thus we may assume that $U=[s_0]\cup\ldots\cup[s_k]$ and $V=[t_0]\cup\ldots\cup[t_k]$ where these sets are pairwise disjoint. 
Then the function $f_U:U\to V$ defined by $f_U(x)=h_{s_j,t_j}(x)$ if $x\in[s_j]$ is a homeomorphism such that $f_U[U\cap Y]=V\cap Y$.

Then we define
$$
H(x)=\left\{
\begin{array}{ll}
h_n(x), & \textrm{ if } x\in G_n\sp0,\\
e, & \textrm{ if }x=d_n,\\
f_U(x), & \textrm{ if } x\in U\in\I\sp\prime.
\end{array}
\right.
$$

Then $H:\cantorset\to \cantorset$ is a homeomorphism. By Lemma \ref{fons}, for every $I\in\I\sp\prime$ there is a KR-cover $\pair{\I\sp{0}(I),\I\sp{1}(I),\alpha_I}$ for $\pair{I\setminus G,\beta(I)\setminus H[G],H\!\!\restriction_{G\cap I}}$. Define $\I\sp{0}=\bigcup\{\I\sp{0}(I):I\in\I\sp\prime\}$, $\I\sp{1}=\bigcup\{\I\sp{1}(I):I\in\I\sp\prime\}$ and $\alpha=\bigcup\{\alpha_I:I\in\I\sp\prime\}$. Then  $\pair{\I\sp0,\I\sp1,\alpha}$ is a KR-cover for $\pair{\cantorset\setminus G,\cantorset\setminus H[G],H\!\!\restriction_{G}}$. If necessary, this KR-cover may be refined to a KR-cover where all clopen subsets involved have diameter $<\frac{1}{n+2}$.

It is not hard to see that $G$, $\alpha$, $\I\sp0$, $\I\sp1$ and $H$ have the desired properties (a) - (h) when $i=0$.  In order to finish the induction step, we have to do another 
refinement to both the homeomorphism and the covers so that (a) to (h) hold also when $i=1$. This is entirely analogous to what we have done, so we leave the details to the reader.

Having carried out the recursive construction, observe that the 
sequences $(h_n)_n$ and $(h_n^{-1})_n$ are Cauchy (in the complete space of homeomorphisms of $2^\omega$ endowed with the topology of uniform convergence), 
hence $h = \lim_{n\to\infty} h_n$ exists and is the homeomorphism with the desired properties.
\end{proof}

In particular, if $X$ is a saturated $\lambda$-set then $X$ is \CDH, and the following corollaries easily follow.

\begin{corollary}
For every uncountable cardinal $\kappa\le \cont$, the following statements are equivalent:
\begin{enumerate}
\item There is a meager in itself {\CDH} metric space of size $\kappa$,
\item There is a $\lambda$-set of size $\kappa$.
\end{enumerate}
\end{corollary}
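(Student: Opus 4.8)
The plan is to establish the two implications separately, since essentially all of the genuinely new {\CDH} content has already been carried by Theorem \ref{main-thm} and by the Fitzpatrick--Zhou analysis of meager {\CDH} spaces recalled in Section \ref{introduction}; what remains is to assemble these pieces and keep track of cardinalities.

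For the implication $(2)\Rightarrow(1)$, I would start from a $\lambda$-set $X\con 2^\omega$ with $|X|=\kappa$. Since $\kappa$ is uncountable, $X$ is uncountable, so Lemma \ref{lambda}(2) produces a \emph{saturated} $\lambda$-set $Y\con 2^\omega$ with $|Y|=\kappa$. Applying Theorem \ref{main-thm} to $Y$ shows that $Y$ is relatively {\CDH} in $2^\omega$; given countable dense $D_0,D_1\con Y$ and an ambient homeomorphism $h$ of $2^\omega$ with $h[D_0]=D_1$ and $h[Y]=Y$, the restriction $h\restriction Y$ is a homeomorphism of $Y$ carrying $D_0$ onto $D_1$, so $Y$ is {\CDH} as a space in its own right. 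Finally, every $\lambda$-set is meager in itself, so $Y$ is the required meager in itself {\CDH} metric space of size $\kappa$.

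For the implication $(1)\Rightarrow(2)$, I would take a meager in itself {\CDH} metric space $X$ with $|X|=\kappa$ and first recover the $G_\delta$ property exactly as in the observation reproduced above: first category yields a countable dense set $D$ that is $G_\delta$ in $X$, and then any countable $A\con X$ is enlarged to a countable dense set, transported onto $D$ by a homeomorphism witnessing countable dense homogeneity, and thereby seen to be $G_\delta$. Thus every countable subset of $X$ is $G_\delta$. To obtain a genuine $\lambda$-set of size $\kappa$ one still needs that $X$ may be taken zero-dimensional; this is precisely the content of the Fitzpatrick--Zhou theorem that a meager in itself {\CDH} metric space is a $\lambda$-set. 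Once $X$ is known to be a zero-dimensional separable metric space in which all countable sets are $G_\delta$, it embeds in $2^\omega$, and since the defining property is topological and hereditary, the image is a $\lambda$-set of cardinality $\kappa$.

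The only delicate point is the one flagged in $(1)\Rightarrow(2)$: passing from ``every countable subset is $G_\delta$'' to actual zero-dimensionality, so that the space is honestly a $\lambda$-set and not merely a space sharing the $G_\delta$ property. As this step is supplied by the cited Fitzpatrick--Zhou result, the remaining work is purely the bookkeeping of cardinalities together with the routine upgrade, in $(2)\Rightarrow(1)$, from the relative {\CDH} conclusion of Theorem \ref{main-thm} to {\CDH} of $Y$ itself.
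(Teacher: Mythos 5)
Your proof is correct and takes essentially the same route as the paper, which offers no separate argument but notes that the corollary ``easily follows'' from Theorem \ref{main-thm}: namely, $(2)\Rightarrow(1)$ via Lemma \ref{lambda}(2) plus the fact that $\lambda$-sets are meager in themselves, and $(1)\Rightarrow(2)$ via the Fitzpatrick--Zhou theorem recalled in Sections 1 and 2. Your explicit steps (restricting the ambient homeomorphism $h$ of $2^\omega$ to $Y$, and attributing the zero-dimensionality needed for a genuine $\lambda$-set to the cited Fitzpatrick--Zhou result) are exactly the details the paper leaves implicit.
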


\begin{corollary}\label{eerstecor}
For every cardinal $\kappa$ such that $\omega_1\le\kappa\le \mathfrak{b}$ there exists a meager in itself {\CDH} metric space of size $\kappa$.
\end{corollary}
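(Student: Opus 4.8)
The plan is to assemble three ingredients that are already available in the excerpt: Rothberger's theorem on the existence of $\lambda$-sets, the saturation procedure of Lemma~\ref{lambda}(2), and Theorem~\ref{main-thm} together with the observation recorded immediately after it. The corollary will then follow by a short chain of implications, with essentially no new work.

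First I would invoke Rothberger's result, stated at the beginning of Section~2, that there exist $\lambda$-sets of size $\kappa$ for every $\kappa\le\mathfrak{b}$. Since we are given $\omega_1\le\kappa\le\mathfrak{b}$, this produces a $\lambda$-set $X\subseteq 2^\omega$ of cardinality exactly $\kappa$. An arbitrary such $X$ need not be saturated with respect to the tail equivalence $\sim$, so the next step is to apply Lemma~\ref{lambda}(2) to replace $X$ by a \emph{saturated} $\lambda$-set $Y\subseteq 2^\omega$ of the same cardinality $\kappa$. The content of this step is precisely that saturation can be carried out without inflating the cardinality: the embedding $\varphi$ supplied by Silver's theorem is injective and sends distinct points to $\sim$-inequivalent points, so $Y=\varphi[X]^*$ meets each $\sim$-class in at most countably many points and hence $|Y|=|X|=\kappa$.

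Because $\kappa\ge\omega_1$, the set $Y$ is uncountable, so the hypotheses of Theorem~\ref{main-thm} are met: $Y$ is a relatively \CDH{} subspace of $2^\omega$, and, as noted in the sentence following that theorem, $Y$ is therefore \CDH{} in its own right. Finally, every $\lambda$-set is meager in itself (recorded in Section~2), so $Y$ is a meager in itself \CDH{} metric space of size $\kappa$, which is exactly what the corollary asserts.

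I do not anticipate any genuine obstacle, since all the difficulty is concentrated in Theorem~\ref{main-thm}; once that theorem and Lemma~\ref{lambda} are in hand, the corollary is a routine concatenation. The only two points that warrant a moment's attention are that the saturation in Lemma~\ref{lambda}(2) genuinely preserves cardinality (so that we land at exactly $\kappa$ and not at some larger set) and that $\mathfrak{b}\le\cont$, which guarantees that $\kappa$ lies in the range covered by the preceding corollary. In fact this last remark furnishes an even shorter alternative route: combine Rothberger's theorem with the immediately preceding corollary, which equates meager in itself \CDH{} metric spaces with $\lambda$-sets of a prescribed uncountable size at most $\cont$.
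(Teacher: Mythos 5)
Your proposal is correct and takes essentially the same route the paper intends: Rothberger's theorem supplies a $\lambda$-set of size $\kappa$, Lemma~\ref{lambda}(2) replaces it by a saturated one of the same cardinality, Theorem~\ref{main-thm} (with the remark following it) makes that set \CDH, and every $\lambda$-set is meager in itself. The paper leaves this chain implicit (``the following corollaries easily follow''), and your write-up simply spells it out, including the cardinality-preservation point in the saturation step.
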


Given the latter, one has to wonder whether there is (in ZFC) a \CDH-space space of any cardinality  below $\mathfrak c$.
We will show that it is not the case.

\begin{theorem}\label{tweedestelling}
It is consistent with ZFC that the continuum is  arbitrarily large and every \CDH \  metric space has size either $\omega_1$ or $\cont$ and,
moreover,
\begin{enumerate}
\item all metric \CDH-spaces of size $\omega_1$ are $\lambda$-sets, and
\item all metric \CDH-spaces of size $\cont$ are non-meager.
\end{enumerate}
\end{theorem}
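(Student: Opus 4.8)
The plan is to exhibit a single forcing extension in which $\cont$ takes a prescribed large value and both structural clauses hold, and to reduce the entire statement to two facts about cardinal invariants. The engine is the decomposition, recorded in the introduction, of an arbitrary \CDH{} metric space $X$ as a disjoint (clopen) sum $X=B\sqcup M$ of a Baire \CDH{} space $B$ and a meager-in-itself \CDH{} space $M$; by the theorem of Fitzpatrick and Zhou quoted above, $M$ is a $\lambda$-set. Since $|X|=\max(|B|,|M|)$, the dichotomy together with clauses (1) and (2) will all follow once I force a model satisfying (I) every $\lambda$-set has size $\le\omega_1$, and (II) $\mathrm{non}(\mathcal{M})=\cont$. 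Indeed, for uncountable $X$: the $\lambda$-set $M$ has $|M|\le\omega_1$ by (I); and if $B$ is uncountable (equivalently its crowded part is nonempty) then $|B|=\cont$ by the argument in Step (II) below. Hence $|X|\in\{\omega_1,\cont\}$. If $|X|=\omega_1<\cont$ then $B$ must be countable, so $X$ is the union of the $\lambda$-set $M$ with a countable set and is therefore itself a $\lambda$-set by the sum lemma above, giving (1); if $|X|=\cont$ then $B\ne\emptyset$, so $X$ carries a nonempty open Baire piece and is not meager in itself, giving (2). Throughout one restricts to crowded spaces, the scattered part of a separable metrizable \CDH{} space being countable and harmless.

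Clause (II) needs no combinatorics beyond $\mathrm{non}(\mathcal{M})=\cont$. Let $B$ be a crowded Baire separable metric space and embed it densely in its completion $\widehat B$, a perfect Polish space. If $B$ were meager in $\widehat B$, say $B\subseteq\bigcup_n F_n$ with each $F_n$ closed and nowhere dense in $\widehat B$, then each $B\cap F_n$ would be nowhere dense in $B$ (as $B$ is dense), exhibiting $B$ as meager in itself and contradicting that $B$ is Baire. Thus $B$ is a non-meager subset of $\widehat B$, and every non-meager subset of a perfect Polish space has size $\ge\mathrm{non}(\mathcal{M})$; so $|B|\ge\mathrm{non}(\mathcal{M})=\cont$, whence $|B|=\cont$.

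For the model I would add $\kappa$ random reals to a ground model of \CH{} (force with the measure algebra of $2^{\kappa}$), where $\kappa$ is the desired value of the continuum. This forcing is ccc, so cardinals are preserved and $\cont=\kappa$; being $\omega^\omega$-bounding it keeps $\mathfrak{b}=\omega_1$; and it is the standard model with $\mathrm{cov}(\mathcal{M})=\omega_1$ and $\mathrm{non}(\mathcal{M})=\cont$, so (II) holds and, since $\mathrm{cov}(\mathcal{M})=\omega_1$, there are no Luzin sets of size $>\omega_1$ (which, being non-meager, are in any case not $\lambda$-sets). It remains to verify (I) in this model, and this is where the real work lies.

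The hard part is therefore clause (I): proving that the random-real model contains no $\lambda$-set of size $\ge\omega_2$. Note first that $\mathfrak{b}=\omega_1$ is necessary, for by Rothberger every set of size $<\mathfrak{b}$ is a $\lambda$-set, so $\mathfrak{b}>\omega_1$ would already produce $\lambda$-sets of size $\omega_2$; thus the genuine content of (I) is to exclude the \emph{exotic}, specially structured $\lambda$-sets of size $\ge\omega_2$ that $\mathfrak{b}=\omega_1$ alone does not preclude. Because $\cont$ must be arbitrarily large, there is no room to enumerate and destroy all candidate sets in an iteration of length $\kappa$; instead I would argue uniformly from a preservation property of the measure algebra. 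The scheme I would pursue: given a putative $\lambda$-set $X$ of size $\omega_2$, use ccc and a $\Delta$-system/reflection argument to locate an intermediate extension $V[G_\alpha]$ (with $|\alpha|=\omega_1$) carrying an $\omega_1$-sized subset $X_0\subseteq X$, a countable dense $D\subseteq X_0$, and the open sets witnessing that $D$ is a relative $G_\delta$; then select a point of $X$ suitably generic over $V[G_\alpha]$ and show that no $F_\sigma$ set coded in the full extension can simultaneously cover $X\setminus D$ and avoid $D$, contradicting the $\lambda$-property. Making this genericity argument go through—identifying precisely which preservation property of random forcing forbids the required separation while remaining compatible with $\mathrm{non}(\mathcal{M})=\cont$—is the main obstacle; it may ultimately be cleaner to isolate the cardinal invariant governing the supremum of sizes of $\lambda$-sets and to show directly that the measure algebra forces it to equal $\omega_1$.
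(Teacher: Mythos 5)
Your reduction of the theorem to the two statements (I) and (II) is sound, and your argument for (II) is correct: a crowded Baire separable metric space is non-meager in its completion, hence has size at least $\mathrm{non}(\mathcal{M})$. The fatal problem is the choice of model: in the random real model, statement (I) is provably \emph{false}, so the ``real work'' you postpone to the last paragraph cannot be done by any preservation argument. Indeed, the $\kappa$ random reals themselves form a Sierpi\'nski set of size $\cont$ (every null set in the extension is covered by a null set coded in a subextension generated by countably many coordinates, and all but countably many of the random reals are random over that subextension), and every Sierpi\'nski set $S$ is a $\lambda$-set: given a countable $D\subseteq S$, cover $D$ by a null $G_\delta$ set $H$; then $S\cap H$ is countable, so its subset $D$ has countable, hence $F_\sigma$, complement in $S\cap H$, which makes $D$ a relative $G_\delta$ in $S\cap H$ and therefore in $S$. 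Thus the random model contains $\lambda$-sets of size $\cont\ge\omega_2$ (and, passing to subsets, of every intermediate cardinality). Worse, by the paper's own equivalence between the existence of a $\lambda$-set of size $\kappa$ and of a meager in itself {\CDH} metric space of size $\kappa$, your model contains meager in itself {\CDH} metric spaces of size $\cont$, so both the dichotomy and clause (2) of the very theorem you are proving fail there. This also exposes a structural tension in the plan: forcing $\mathrm{non}(\mathcal{M})=\cont$ with random reals is precisely what manufactures large Sierpi\'nski, hence large $\lambda$-, sets.

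The paper instead works in the Cohen model ($\kappa\ge\omega_2$ Cohen reals over a model of \CH), where Miller's cited theorem gives (I): all $\lambda$-sets have size $\omega_1$. But there $\mathrm{non}(\mathcal{M})=\omega_1$, so your mechanism (II) for the Baire side is unavailable; it is replaced by a forcing-specific Claim: \emph{if a crowded separable metric space $X$ is {\CDH} after adding Cohen reals, then $X$ is meager in itself.} The proof is a name-counting argument: let $\dot h$ name a homeomorphism of $X$ mapping a countable dense $D\subseteq X$ onto a Cohen-generic dense subset of $D$; then $\dot h$ is equivalent to a $\mathbb{C}_J$-name for a countable set of coordinates $J$, and $X=\bigcup_{p\in\mathbb{C}_J}N_p$, where each $N_p=\{x\in X: p\mbox{ decides }\dot h(x)\}$ is closed and nowhere dense because it meets $D$ in a finite set (a single Cohen condition forces only finitely many points of $D$ into the generic dense subset). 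Consequently any {\CDH} metric space of size less than $\cont$, living as it does in an intermediate extension over which the remaining forcing still adds Cohen reals, is meager in itself, hence a $\lambda$-set by Fitzpatrick--Zhou, hence of size $\omega_1$ by Miller; this yields the dichotomy and both clauses simultaneously. The moral is that one should not try to bound the size of \emph{all} crowded Baire separable metric spaces (which genuinely requires $\mathrm{non}(\mathcal{M})=\cont$), but only of the {\CDH} ones, and that is exactly what the Claim accomplishes with category-flavored (Cohen) rather than measure-flavored (random) forcing.
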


\begin{proof}
The model for this is the so called \emph{Cohen model}. That is start with a model of \CH\ and
force with $\mathbb C_\kappa$ - the partial order for adding $\kappa$-many Cohen reals, where $\kappa\geq\omega_2$. A. Miller
~\cite[Theorem 22]{Miller93} showed that all $\lambda$-sets in the Cohen model have size $\omega_1$.

\smallskip

{\bf Claim.} \emph{Let $X$ be a crowded separable metric space. If $X$ is \CDH\ after adding some number of Cohen reals then
$X$ is meager in itself (hence a $\lambda$-set).}

\smallskip

Let $D$ be a countable dense subset of $X$ and let $\dot C$ be a name for a Cohen subset of $D$.
By genericity $\Vdash `` \dot C$ is a dense subset of $D$''. Let $\dot h$ be a $\mathbb C_\kappa$-name
for a homeomorphism of $X$ which sends $D$ onto $\dot C$.
Then there is a countable set of ordinals $J$ such that $\dot h$ is (equivalent to) a $\mathbb C_J$-name. Now, as $\mathbb C_J$ is countable,
$$
    X=\bigcup_{p \in \mathbb C_J} N_p,
$$
where
$$
    N_p=\{ x \in X : p \mbox{ decides }\dot h(x)\}.
$$
Now, it is easy to see that the set $N_p$ is closed and nowhere dense in $X$ (has only finite intersection with $D$), hence $X$ is meager in itself.
This completes the proof of the claim.

\smallskip

It follows that if $X$ is a \CDH-metric space of size less then $\mathfrak c$ then $X$ is a  $\lambda$-set, hence of size $\omega_1$ by Miller's result.
\end{proof}

Another corollary of Theorem \ref{main-thm} (or rather of its proof) is the following:

\begin{theorem}
Let $X\subseteq 2^\omega$ be an uncountable saturated $\lambda'$-set. Then for every
$D_0, D_1$ countable dense subsets of $X$ and $E_0, E_1$ countable dense subsets of $2^\omega\setminus X$
there is a homeomorphism $h$ of $2^\omega$ such that
\begin{enumerate}
\item $h[D_0]=D_1$,
\item $h[E_0]=E_1$, and
\item $h[X]=X$.
\end{enumerate}
\end{theorem}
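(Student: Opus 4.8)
The plan is to re-run the recursive construction in the proof of Theorem~\ref{main-thm} almost verbatim, the only structural change being that the ``scaffold'' of individually matched points is enlarged to contain dense pieces \emph{both} inside and outside $X$, and that the single use of the $\lambda$-property is taken over by the $\lambda'$-property. First I would set up the scaffold. Since $X$ is a $\lambda$-set it has empty interior, so $\cantorset\setminus X$ is dense and $E_0,E_1$ are dense in $\cantorset$; since $X$ is saturated, so is $\cantorset\setminus X$. Exactly as in Theorem~\ref{main-thm}, I enlarge each $D_i$ to a saturated set $A_i\con X$ with $A_i\setminus D_i$ countable dense, and enlarge each $E_i$ to a saturated set $B_i\con\cantorset\setminus X$ with $B_i\setminus E_i$ countable dense. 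Put $A=A_0\cup A_1$, $B=B_0\cup B_1$ and $Y=X\setminus A$. Then $A,B,Y$ are saturated, $A$ is dense in $\cantorset$, and $A\cap B=\emptyset$.

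The key new step is the single application of the $\lambda'$-hypothesis. Applying it to the countable set $A\cup B\con\cantorset$, and using $A\con X$ (so that $X\cup(A\cup B)=X\cup B$), I get that $A\cup B$ is a relative $G_\delta$ in $X\cup B$; consequently $Y=(X\cup B)\setminus(A\cup B)$ is relative $F_\sigma$ in $X\cup B$, say $Y=(X\cup B)\cap\bigcup_{n}K_n$ with each $K_n$ closed in $\cantorset$. This is precisely where $\lambda'$ is stronger than $\lambda$: every point of $A\cup B$ lies in $X\cup B$ but not in $Y$, so $\bigcup_n K_n$ is automatically \emph{disjoint from the whole scaffold} $A\cup B$. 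Moreover each $K_n$ is nowhere dense, since a nonempty interior would contain a cone meeting the dense set $A$, contradicting $K_n\cap A=\emptyset$.

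With this family $\{K_n\}$ I would then copy the recursion of Theorem~\ref{main-thm}: enumerate the scaffold $A\cup B$ as $\{d_k:k<\omega\}$, build homeomorphisms $h_n$ and closed nowhere dense sets $G_n\sp0,G_n\sp1$ together with KR-covers $\pair{\I_n\sp0,\I_n\sp1,\alpha_n}$ satisfying the analogues of (a)--(h), absorbing $K_n$ into $G_{n+1}\sp i$ (for both $i$) as in (a). When the step treats $d_n$, I choose its image $e$ inside the clopen piece $\alpha_n(I_0)$ on the matching side: if $d_n\in A$ I take $e\in A$ with $e\in D_1$ iff $d_n\in D_0$, and if $d_n\in B$ I take $e\in B$ with $e\in E_1$ iff $d_n\in E_0$. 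The density and saturation of the $A_i$ and $B_i$ guarantee that such an $e$ exists, and running the usual back-and-forth over the target ($i=1$) points as well turns the matchings into bijections onto $D_1$ and onto $E_1$. Since every partial map is assembled from the pieces $h_{s,t}$, which respect $\sim$, each $h_n$ preserves every saturated set, in particular $X$, $\cantorset\setminus X$ and $Y$. As before $(h_n)$ and $(h_n^{-1})$ are Cauchy, so $h=\lim_n h_n$ is a homeomorphism of $\cantorset$.

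To finish, I read off the three conclusions. Because $\bigcup_n K_n$ misses $A\cup B$, no scaffold point is ever prematurely absorbed into some $G_n\sp i$, so the individual matchings persist in the limit and give $h[D_0]=D_1$ and $h[E_0]=E_1$. Finally $Y\con\bigcup_n K_n\con\bigcup_n G_n\sp i$ for both $i$, so on $Y$ both $h$ and $h^{-1}$ coincide with some stabilized $h_n$; hence $h[Y]=Y$, and combined with $h[A]=A$ this yields $h[X]=h[Y\cup A]=Y\cup A=X$. I expect the only genuine obstacle to be the $\lambda'$ application in the second paragraph, namely obtaining a cover of $Y=X\setminus A$ by closed nowhere dense sets that simultaneously avoids the \emph{outside} scaffold $B$; once that cover is in hand, matching the outside points $E_0\to E_1$ is handled by the same clopen-piece bookkeeping as the inside points, and the remainder is a faithful rerun of the proof of Theorem~\ref{main-thm}.
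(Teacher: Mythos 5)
Your proposal is correct and is essentially the paper's own argument: the paper states this theorem as a corollary of the \emph{proof} of Theorem~\ref{main-thm}, i.e.\ exactly the rerun you describe, with the scaffold enlarged to countable saturated dense sets on both sides of $X$. You have also correctly isolated the one genuinely new point that the paper leaves implicit, namely that applying the $\lambda'$-property to $A\cup B$ yields closed nowhere dense sets $K_n$ covering $Y=X\setminus A$ that automatically avoid the outside scaffold $B$ as well, which is precisely where $\lambda'$ (rather than $\lambda$) is needed.
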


This has the following interesting consequence\footnote{Recall that a space $X$ is \emph{completely Baire} if all of 
its non-empty closed subspaces are Baire.}:

\begin{corollary}
If $X\subseteq 2^\omega$ is an uncountable saturated $\lambda'$-set then $2^\omega\setminus X$ is a  (completely) Baire \CDH-space.
\end{corollary}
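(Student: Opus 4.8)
The plan is to prove the two assertions separately: countable dense homogeneity is read off directly from the preceding theorem, while complete Baireness is a consequence of $X$ being a $\lambda$-set (which it is, being a $\lambda'$-set) and uses nothing more. Throughout write $Z=2^\omega\setminus X$; note that $Z$ is a nonempty separable metrizable space, since $X\ne 2^\omega$ (the Cantor set is not a $\lambda$-set).

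For the \CDH\ part, I would fix once and for all a countable dense set $D\con X$. Given two countable dense subsets $E_0,E_1$ of $Z$, apply the preceding theorem with $D_0=D_1=D$: it produces a homeomorphism $h$ of $2^\omega$ with $h[X]=X$ and $h[E_0]=E_1$. Since $h$ is a bijection of $2^\omega$ fixing $X$ setwise, it fixes $Z$ setwise, so $h\restriction Z$ is an autohomeomorphism of $Z$ carrying $E_0$ onto $E_1$. Hence $Z$ is \CDH.

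For complete Baireness, suppose toward a contradiction that some nonempty closed subspace $S_0$ of $Z$ fails to be Baire, so that some nonempty relatively open $W\con S_0$ is meager in $S_0$. Choosing a basic clopen set $[s]$ of $2^\omega$ with $\emptyset\ne [s]\cap S_0\con W$, replace $S_0$ by $S:=[s]\cap S_0$; then $S$ is closed in $Z$ and, being open in $W$, is meager in itself. Put $F=\overline{S}$, the closure taken in $2^\omega$, a compactum in which $S$ is dense. Because $S$ is closed in $Z$ one checks that $S=F\setminus X$, so that $A:=F\cap X=F\setminus S$ and $F$ is partitioned into $S$ and $A$. The crux is the elementary fact that \emph{a dense subset of a compactum which is meager in itself is meager in the whole compactum}: applied to $S$ this shows $S$ is meager in $F$, whence $A$ is comeager, in particular dense, in $F$. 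But $A\con X$ is a $\lambda$-set, hence meager in itself, and applying the same fact to the dense set $A$ shows $A$ is meager in $F$ as well. Then $F=S\cup A$ is meager in itself, contradicting the fact that the nonempty compact metric space $F$ is Baire.

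I expect the only real work to lie in the point-set bookkeeping behind the emphasized fact: for a set $T$ dense in its closure $F$, nowhere density in $T$ and nowhere density in $F$ coincide for subsets of $T$, so a countable cover of $T$ by sets nowhere dense in $T$ closes up to a countable cover of $T$ by sets nowhere dense in $F$. This must be invoked once for $S$ and once for $A$. The conceptual point worth recording is that the $\lambda'$-hypothesis enters only through the preceding theorem, to secure \CDH; complete Baireness already holds for the complement of an arbitrary $\lambda$-set.
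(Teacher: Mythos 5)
Your proposal is correct, and while the \CDH{} half coincides with the paper's (the paper simply declares the space ``clearly'' \CDH; your restriction argument is exactly what is meant), your proof of complete Baireness takes a genuinely different route. The paper quotes Hurewicz's theorem --- a separable metric space is completely Baire if and only if it contains no closed copy of the rationals --- and then uses the $\lambda'$ property in an essential way: a closed copy $Q$ of $\mathbb{Q}$ in $2^\omega\setminus X$ would be relatively $G_\delta$ in $2^\omega\setminus X$ (closed subsets of metric spaces are $G_\delta$) and, by the $\lambda'$ property, relatively $G_\delta$ in $Q\cup X$; intersecting the two witnessing $G_\delta$ sets shows $Q$ is $G_\delta$ in $2^\omega$, hence Polish, which is absurd. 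You avoid Hurewicz altogether and show by hand that every closed subspace of $2^\omega\setminus X$ is Baire: a failure yields a nonempty $S$, closed in the complement and meager in itself, whose closure $F$ splits as $S\sqcup(F\cap X)$, and both pieces are dense in $F$ and meager in themselves, hence meager in $F$ by your transfer lemma, making the compactum $F$ meager in itself --- a contradiction. Both supporting lemmas you flag (nowhere density passes from a dense subspace to the ambient space; open subspaces of meager-in-itself spaces are meager in themselves) are standard and correct. What each approach buys: the paper's proof is two lines once Hurewicz's nontrivial theorem is granted, but it genuinely needs $\lambda'$; yours is elementary and self-contained, and it isolates the stronger fact that the complement of an \emph{arbitrary} $\lambda$-set is completely Baire, so that uncountability, saturation and $\lambda'$ are needed only for the \CDH{} conclusion. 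One point should be made explicit: the step ``$A\subseteq X$ is a $\lambda$-set, hence meager in itself'' (which echoes the paper's own blanket statement in Section 2) is valid only for crowded spaces, since a $\lambda$-set with an isolated point is not meager in itself; here $A=F\cap X$ is indeed crowded, because it is dense in $F$ and disjoint from the dense set $S$, so no nonempty open subset of $F$ can meet $A$ in exactly one point. With that one-line remark inserted, your proof is complete.
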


\begin{proof} The space $2^\omega\setminus X$ is clearly a \CDH-space. To see that it is completely Baire it suffices, by a result of 
Hurewicz \cite{hurewicz} (see \cite[pp. 78-79]{vanmill}), to show that $2^\omega\setminus X$ does not contain a closed copy of the rationals. Indeed, if $Q$ were such a copy
then $Q$ would be relatively $G_\delta$ both in $ 2^\omega\setminus X$ and in $Q\cup X$, hence it would be $G_\delta$ in $2^\omega$,
which is a contradiction.
\end{proof}

It should be noted that this result provides the first ZFC example of a metric Baire \CDH-space which is not completely metrizable. In \cite{hg-hr-CDH}
it is shown (extending a somewhat weaker result by Medini and Milovich \cite{medinimilovich}) that all non-meager P-ideals (seen as subspaces of $2^\omega$)
are {\CDH} and it is an older result of Marciszewski \cite{marciszewski} that  non-meager P-ideals are completely Baire. In fact, it has been recently
proved by Kunen, Medini and Zdomskyy \cite{kunen-medini-zdomskyy} that an ideal $\mathcal I$ is \CDH \  if and only if $\mathcal I$ is a non-meager P-ideal.
It is, however, an open problem whether non-meager P-ideals exist in ZFC (see \cite{nonmeagerp}). 

Unlike in the case of spaces which 
are meager in themselves, we do not know how to manipulate
cardinalities of Baire \CDH-spaces.

\begin{question}
 Is it consistent with ZFC to have a metric Baire \CDH-space without isolated points of size less than $\mathfrak c$?
\end{question}

The last comment on the construction deals with products. In a recent paper, Medini \cite{medini-CDH-products} constructed a zero-dimensional metric {\CDH} space whose
square is not \CDH \ under Martin's axiom, and  asked whether similar examples exist for higher dimensions. Our example answers one of his questions:

\begin{theorem}
Let $X\subseteq 2^\omega$ be an uncountable saturated $\lambda$-set. Then $X^n$ is {\CDH} for every $n\in\omega$ while $X^\omega$ is not \CDH.
\end{theorem}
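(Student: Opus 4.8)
The plan is to prove the two halves separately, using the machinery already developed. For the positive part, that $X^n$ is \CDH, I would first invoke Lemma~\ref{prodlamb} inductively to conclude that $X^n$ is itself a $\lambda$-set. The natural strategy is to realize $X^n$ as (homeomorphic to) a saturated $\lambda$-subset of $2^\omega$ and then apply Theorem~\ref{main-thm} directly. To do this I would fix the standard homeomorphism $(2^\omega)^n \cong 2^\omega$ obtained by interleaving coordinates, and check that the image of $X^n$ under this identification is saturated with respect to the tail equivalence $\sim$. The key observation here is that the tail equivalence is compatible with finite products: if $x_j \sim y_j$ for each $j < n$, then after interleaving the $n$ sequences the resulting pair of points is again tail-equivalent, since the interleaving only reshuffles the coordinates by a fixed recursive bijection that preserves cofinite agreement. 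Thus the saturation of $X$ in each coordinate yields saturation of the interleaved image, and Theorem~\ref{main-thm} applies to give that $X^n$ is (relatively, hence absolutely) \CDH.

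For the negative part, that $X^\omega$ is \emph{not} \CDH, the plan is to show that $X^\omega$ fails to be a $\lambda$-set, and then appeal to the Fitzpatrick--Zhou result (quoted in the excerpt just before the tail-equivalence discussion) that every meager in itself \CDH\ metric space is a $\lambda$-set. The point is that $X^\omega$ is meager in itself (a countable product of crowded spaces is crowded and first category in itself), so if it were \CDH\ it would have to be a $\lambda$-set; I would derive a contradiction by exhibiting a countable subset of $X^\omega$ that is \emph{not} relatively $G_\delta$. The natural candidate is a countable dense set $D$ concentrated on a single ``axis'': fix a point $p \in X$ and a countable dense $D_0 \subseteq X$, and consider the set of eventually-$p$ sequences whose entries lie in $D_0$. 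The obstruction is that $\lambda$-sets are not preserved under countable products — indeed being a $\lambda$-set is a smallness condition destroyed by taking infinite powers, much as $2^\omega$ itself (a $\lambda$-set iff countable) fails, and one expects $X^\omega$ to contain, in a suitable sense, a closed copy of a non-$\lambda$ configuration.

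I expect the main obstacle to be the negative direction: pinning down precisely which countable subset of $X^\omega$ fails to be $G_\delta$ and proving that it fails. The cleanest route is probably to find a closed copy of the rationals, or more to the point a countable set $D$ whose complement in $X^\omega$ is not $F_\sigma$, using the fact that an uncountable $X$ contains a countable subset whose $G_\delta$-ness in the power can be analyzed coordinatewise via a diagonal/category argument. One should be careful that $X$ uncountable is genuinely used here (for countable $X$ the product $X^\omega$ is again a separable zero-dimensional space and the statement could behave differently), so the argument must exploit the uncountability of $X$ to manufacture the failure of the $G_\delta$ property. I would carry this out by a Baire-category argument inside $(2^\omega)^\omega$: writing a hypothetical $F_\sigma$ representation of the complement as a countable union of closed sets, I would use the saturation and density of $X$ to find a point of $X^\omega$ lying in the countable set but trapped in one of the closed pieces, contradicting that the pieces avoid it.
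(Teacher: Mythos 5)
Your positive half follows exactly the paper's route: $X^n$ is a $\lambda$-set by Lemma~\ref{prodlamb}, the interleaving homeomorphism $\psi_n\colon(2^\omega)^n\to 2^\omega$ carries $X^n$ onto a saturated set, and Theorem~\ref{main-thm} finishes. However, your justification of the saturation step is flawed in two ways. First, your ``key observation'' is false as stated: the relation $\sim$ permits \emph{different} shifts in the two tails, and what interleaving preserves is only eventual (cofinite) agreement, not tail equivalence. Concretely, for $n=2$ let $x_0$ be the indicator function of the squares, $y_0$ its shift by one, and $x_1=y_1$ the indicator function of the cubes; then $x_0\sim y_0$ and $x_1\sim y_1$, but $\psi_2(x_0,x_1)\not\sim\psi_2(y_0,y_1)$: a single global shift of the interleaved sequences either imposes the \emph{same} index shift on both coordinates (impossible, since the squares force shift $-1$ while the cubes force shift $0$) or matches the squares against the cubes (also impossible). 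Second, even if the claim were true it points the wrong way: saturation of $\psi_n[X^n]$ means that every $w\sim\psi_n(x_0,\dots,x_{n-1})$ with all $x_j\in X$ lies again in $\psi_n[X^n]$. The correct argument is that a global shift of an interleaved sequence acts on the coordinates as a cyclic permutation combined with index shifts, so each coordinate of $\psi_n^{-1}(w)$ is tail equivalent to \emph{some} $x_{j'}$ (possibly $j'\neq j$), and hence lies in $X$ because $X$ itself is saturated. This is what the paper's ``it should be obvious'' amounts to; note that it uses the saturation of $X$, not any product-compatibility of $\sim$.

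Your negative half takes a genuinely different route from the paper, which simply cites a theorem of Hern\'andez-Guti\'errez (if $X^\omega$ is \CDH\ and $X$ is crowded, then $X$ contains a copy of $2^\omega$, which a $\lambda$-set cannot). Your plan --- $X^\omega$ is meager in itself, so by Fitzpatrick--Zhou it would be a $\lambda$-set if it were \CDH, and it is not a $\lambda$-set --- is viable and has the advantage of being self-contained, but your execution has one false claim and one missing step. The false claim: ``a countable product of crowded spaces is crowded and first category in itself'' fails, since $(2^\omega)^\omega\cong 2^\omega$ is a crowded Baire space. What you need, and what is true, is that $X$ itself is meager in itself (every $\lambda$-set is, as the paper notes) and that a countable power of a meager in itself space is meager in itself (pull a witnessing decomposition of $X$ back through the first coordinate projection). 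The missing step is the actual exhibition of a countable non-$G_\delta$ set, for which you anticipate a delicate diagonal argument; none is needed. For any two distinct points $a,b\in X$ the subspace $\{a,b\}^\omega\subseteq X^\omega$ is a closed copy of the Cantor set; since $\lambda$-sets are hereditary and $2^\omega$ is not a $\lambda$-set (a countable dense subset of a crowded compact space is never a relative $G_\delta$, by the Baire category theorem), $X^\omega$ is not a $\lambda$-set and hence not \CDH. Your ``eventually-$p$'' candidate can also be made to work, but only by intersecting it with a compact subspace such as $(\{p\}\cup S)^\omega$, where $S\subseteq D_0$ is a sequence converging to $p$, and running the Baire argument there --- the same Cantor-subspace idea in disguise. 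Finally, contrary to the worry in your last paragraph, uncountability of $X$ plays no role in this half (it is needed only for the positive half, via Theorem~\ref{main-thm}): the argument above shows that $Y^\omega$ is not \CDH\ for \emph{any} meager in itself metric space $Y$ with at least two points.
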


\begin{proof}
First we note that $X^\omega$ is not \CDH . This follows from a recent result of Hern\'andez-Guti\'errez \cite{hg-arrow}, who showed that if  $X^\omega$ is {\CDH} and $X$ is crowded then
$X$ contains a copy of the Cantor set, which of course a $\lambda$-set  does not contain.

To finish the proof it suffices to show that $X^n$ is homeomorphic to a saturated  $\lambda$-set for every $n\in\omega$.
 The space $X^n$ is a $\lambda$-set by Lemma \ref{prodlamb}. Now consider the function $\psi_n: (2^\omega)^n\rightarrow 2^\omega$  defined by
$$\psi_n(x_0,\dots,x_{n-1})(n\cdot k +j) = x_j(k).$$
The function $\psi_n$ is a homeomorphism between  $(2^\omega)^n$ and $2^\omega$, and it should be obvious that if $X$ is a saturated subset of
$2^\omega$ then $\psi_n [X^n]$ is also saturated.
\end{proof}

\smallskip

The property of being saturated  describes how a set is situated inside of $2^\omega$. It would be interesting to characterize
meager in themselves \CDH-spaces internally. Recall that a zero-dimensional space $X$ is \emph{strongly homogeneous} if
all non-empty clopen subsets of $X$ are mutually homeomorphic. The methods of the proof of Theorem \ref{main-thm} can
be used to show the following:

\begin{proposition} Let $X$ be a $\lambda$-set such that 

\begin{enumerate}
\item $X$ is strongly homogeneous, and 
\item if $A$ and $B$ are subsets of $X$ with $A$ countable and $B$ nowhere dense, then there is a nowhere dense set $B'\subseteq X$ 
homeomorphic to $B$ with  $A\cap B'=\emptyset$.
\end{enumerate}
Then $X$ is \CDH.
\end{proposition}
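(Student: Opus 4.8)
The plan is to run the back-and-forth construction of Theorem \ref{main-thm} \emph{inside} $X$ itself, reading hypotheses (1) and (2) as the internal substitutes for the two features of saturation that were used there. If $X$ had an isolated point, strong homogeneity would force every nonempty clopen set to be a singleton, making $X$ discrete and the statement trivial; so I would assume $X$ is crowded and fix a compatible metric on $X$ to have diameters at my disposal. The structural fact that makes a purely internal construction possible is that a $\lambda$-set is \emph{meager in itself}: thus $X=\bigcup_n F_n$ with each $F_n$ closed and nowhere dense in $X$. This is exactly what lets one \emph{freeze} every point of $X$ at a finite stage, so that the resulting map is defined directly on $X$ and lands in $X$; no limit taken in a completion (which $X$, being non-Baire, cannot supply) is required.

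Given countable dense $D_0,D_1\con X$, I would first pick countable dense $D_0',D_1'\con X$ with $D_i'\cap D_i=\emptyset$ and set $E_i=D_i\cup D_i'$, $E=E_0\cup E_1$, and $Y=X\setminus E$. Then $E\cap D_i$ and $E\setminus D_i$ are both dense, which is precisely what is needed to place a point with the correct $D_0$/$D_1$-status. Since $X$ is a $\lambda$-set, $E$ is $G_\delta$, so $Y$ is $F_\sigma$, and because $E\supseteq D_0$ is dense we may write $Y=\bigcup_n C_n$ with the $C_n$ closed nowhere dense in $X$, increasing, and automatically disjoint from $E$. Finally enumerate $E=\{d_k:k<\omega\}$.

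With this bookkeeping I would copy the recursion of Theorem \ref{main-thm} with source and target space both equal to $X$, building homeomorphisms $h_n\from X\to X$, closed nowhere dense boundaries $G_n^0,G_n^1$, and KR-covers $\pair{\I_n^0,\I_n^1,\alpha_n}$ (existing by Lemma \ref{fons}) satisfying the analogues of (a)--(h), with $C_n$ playing the role of $K_n$ and all reference to $\sim$ deleted. At stage $n+1$ two things happen. The point $d_n$ is processed pointwise: in the target cone $\alpha_n(I_0)$ one chooses $e$ of the correct $D_1$-status (possible since $E\cap D_1$ and $E\setminus D_1$ are dense) and puts $h(d_n)=e$; alternating the direction gives the back-and-forth that makes $f[D_0]=D_1$. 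Simultaneously the chunk $C_n$ is frozen: inside each old piece $U\in\I_n^0$, with $V=\alpha_n(U)$, one must send the closed nowhere dense set $C_n\cap U$ homeomorphically onto a closed nowhere dense subset of $V$ disjoint from the points used so far, and then invoke Lemma \ref{fons} to refine. Here the two hypotheses carry the construction: \textbf{strong homogeneity} supplies the ambient homeomorphisms $g_U\from U\to V$ between clopen pieces of $X$ (the internal replacement for the maps $h_{s,t}$) and, in particular, a copy of $C_n\cap U$ sitting inside $V$; then \textbf{hypothesis (2)}, which $V$ inherits as a clopen copy of $X$, moves that copy off the countable set $E\cap V$ to a nowhere dense $B'\con V$ with $B'\cap E=\emptyset$. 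Mapping $C_n\cap U$ onto $B'$ keeps the bulk image of $Y$ disjoint from $E$, hence disjoint from every pointwise target, which is what forces injectivity. As in the $i=0,1$ symmetry of Theorem \ref{main-thm}, the same is carried out with the roles of source and target reversed.

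Because $Y=\bigcup_n C_n$ is frozen in bulk and $E=\{d_k\}$ is frozen pointwise, the increasing boundaries satisfy $\bigcup_n G_n^0=\bigcup_n G_n^1=X$; consequently $f(x):=h_n(x)$ for $x\in G_n^0$ is a well-defined bijection of $X$ with $f[D_0]=D_1$, and no completion is invoked. Continuity of $f$ at each (now frozen) point, and of $f^{-1}$, is obtained exactly as in Theorem \ref{main-thm} from clause (4) of the KR-cover definition together with the shrinking-diameter requirement, so $f$ is an autohomeomorphism and $X$ is \CDH. The main obstacle, where I would concentrate the care, is maintaining all the invariants at once at stage $n+1$: keeping the bulk $Y$-images disjoint from the countably many pointwise $E$-targets (this is the sole essential use of hypothesis (2)) while still landing each $C_n\cap U$ inside the prescribed cone $V$ and under the diameter bound. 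It is this interaction of (2) with the cone structure, rather than any isolated step, that demands the delicate bookkeeping.
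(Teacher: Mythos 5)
Your overall strategy is exactly the one the paper intends: the paper gives no written proof of this Proposition, saying only that ``the methods of the proof of Theorem \ref{main-thm} can be used'', and your plan --- run the back-and-forth of Theorem \ref{main-thm} inside $X$ itself, use meagerness-in-itself (plus the pointwise enumeration of $E$) so that every point is frozen at a finite stage and the limit map is a union of partial maps rather than a limit taken in a completion, let strong homogeneity play the role of the maps $h_{s,t}$, and let hypothesis (2) play the role that saturation played in keeping the images of the nowhere dense chunks off the countable dense sets --- is a faithful implementation of that program. The bookkeeping with $E_i=D_i\cup D_i'$, with $Y=X\setminus E=\bigcup_n C_n$ closed nowhere dense and disjoint from $E$, and with the piece-respecting conditions analogous to (f)--(h), is also correct.

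There is, however, one genuine gap, and it sits at the exact point where hypothesis (2) is invoked. As you yourself write, at stage $n+1$ you must send $C_n\cap U$ homeomorphically onto a \emph{closed} nowhere dense subset of $V=\alpha_n(U)$ disjoint from $E$: closedness is not cosmetic, since the definition of a KR-cover requires the frozen sets to be closed (otherwise their complements are not open and cannot be partitioned into clopen sets), and Lemma \ref{fons}, which you need both to produce the stage-$(n+1)$ cover and (together with strong homogeneity, filling the new cover's pieces with homeomorphisms) to extend the partial freezing map to a genuine homeomorphism $h_{n+1}$ of $X$, applies only to \emph{closed} nowhere dense subspaces. But hypothesis (2), read literally, yields only a nowhere dense set $B'$ homeomorphic to $C_n\cap U$ with $B'\cap E=\emptyset$, and homeomorphic copies of closed sets need not be closed in a non-Polish space such as an uncountable $\lambda$-set: for instance, a closed countably infinite discrete subset of $X$ (which exists, $X$ being non-compact) has as a homeomorphic copy a convergent sequence with its limit removed, which is nowhere dense but not closed. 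If $B'$ is not closed, then $G_{n+1}^1$ is not closed and the recursion cannot continue; replacing $B'$ by $\overline{B'}$ ruins disjointness from $E$, and freezing $\overline{B'}$ on the target side only destroys surjectivity of the limit map. So you must either argue that the copy provided by (2) can be taken to be closed, or invoke (2) in the stronger form in which it actually holds in \CDH{} spaces (where $B'$ is the image of $B$ under an autohomeomorphism of $X$, hence closed whenever $B$ is). As written, your proof --- and, arguably, the Proposition's hypothesis itself --- leaves this step unjustified.
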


Note that every \CDH \ space satisfies condition (2) while, as we have already seen, a meager in itself \CDH \ metric space is a $\lambda$-set.

\begin{question} Let $X$ be a zero-dimensional homogeneous meager in itself \CDH \ metric space. Is $X$ strongly homogeneous?
\end{question}

A similar question was raised by S.~V.~Medvedev in \cite[Question 1.]{medvedev}. A related question is the following:

\begin{question}
Is there a metric meager in itself  \CDH-space $X$ containg a countable dense set $D$ such that $X$ cannot be embedded in $X\setminus D$?
\end{question}

\section{Compact {\CDH} spaces from $\lambda'$-sets}

The literature on non-metrizable \CDH \ spaces is rather scarce. 
Arhangel'skii and van Mill in \cite{arh-vm-cdh-cardinality} showed that a \CDH{} space  has size at most $\cont$.
They also realized that no ZFC example of a compact non-metrizable \CDH{}  space was known. Consistent examples were known,
Stepr\={a}ns and Zhou \cite{step-zhou} observed that $2^\kappa$ is \CDH{} assuming $\kappa<\mathfrak p$.
A  non-metrizable hereditarily separable and hereditarily Lindel\"of compact \CDH{}  space was constructed 
in \cite{arh-vm-cdh-cardinality} assuming CH. In \cite{arh-vm-cdh-cardinality} it was also shown that assuming $2^\omega<2^{\omega_1}$, every  compact \CDH \ space is
first countable. 

A natural candidate for a compact non-metrizable \CDH{} space seemed to be Aleksandroff's  \emph{double arrow (or split interval)} space $\darrow$ \cite{alex-uryh-compact}.
It turned out that the space is not CDH \cite{arh-vm-cdh-cardinality}, in fact, it has $\mathfrak c$ many types of countable dense sets \cite{hg-arrow}.
It was conjectured that a slight modification of the double arrow should provide a ZFC example. The first attempts were proved unsuccessful by
Hern\'andez Guti\'errez \cite{hg-arrow}, who showed that both $\darrow\times 2^\omega$ and $\darrow^\omega$ are not \CDH. These results follow directly from the following:

\begin{proposition}{\cite{hg-arrow}} Let $X$ and $Y$ be crowded spaces with countable $ \pi$-bases. If the product $X\times Y$ is \CDH , then either both $X$ and $Y$ contain a copy of $2^\omega$ or neither does.
\end{proposition}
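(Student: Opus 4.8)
The plan is to prove the contrapositive: assuming $X\times Y$ is \CDH, I will show that $X$ contains a copy of $2^\omega$ if and only if $Y$ does. Since the two factors enter the conclusion symmetrically, it suffices to prove that if $X$ contains a copy of $2^\omega$ while $Y$ does not, then $X\times Y$ is \emph{not} \CDH. Two standing remarks: a space with a countable $\pi$-base is separable (choosing one point from each element of the $\pi$-base gives a countable dense set), so $X\times Y$ has countable dense sets and \CDH\ is meaningful; and I will work in the Hausdorff setting of the intended applications, so that any copy of $2^\omega$ is compact and closed.

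The engine of the proof is a homeomorphism-invariant assigned to each countable dense set. For a countable dense $D\subseteq X\times Y$ put
$$W(D)=\{z\in X\times Y:\ \exists\,K\cong 2^\omega\text{ with }z\in K\text{ and }D\cap K\text{ dense in }K\}.$$
Any autohomeomorphism $h$ of $X\times Y$ carries copies of $2^\omega$ to copies of $2^\omega$ and preserves relative density, so $h[D]=E$ implies $h[W(D)]=W(E)$; in particular $W(D)=\emptyset$ exactly when $W(E)=\emptyset$ for dense sets matched by a homeomorphism. The entire argument then reduces to producing one countable dense $D_0$ with $W(D_0)\neq\emptyset$ and another countable dense $D_1$ with $W(D_1)=\emptyset$: their coexistence contradicts \CDH.

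Constructing $D_0$ uses only that $X$ contains a copy of $2^\omega$ and is immediate. Fix a copy $C\cong 2^\omega$ inside $X$, a point $y_0\in Y$, and a countable set $R$ dense in $C$; let $D_0$ be any countable dense subset of $X\times Y$ containing $R\times\{y_0\}$. Then $K_0=C\times\{y_0\}$ is a copy of $2^\omega$ on which $D_0$ is dense, so $K_0\subseteq W(D_0)$. Constructing $D_1$ is where the hypothesis on $Y$ is spent. I would first prove the \emph{structural lemma}: every copy $K\cong 2^\omega$ in $X\times Y$ contains a nonempty subset that is clopen in $K$ and lies in a single vertical slice $X\times\{y\}$. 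Indeed, $\pi_Y[K]$ is a compact metrizable subspace of $Y$ omitting $2^\omega$, hence countable by Cantor--Bendixson, hence scattered with an isolated point $y^{*}$; then $K\cap(X\times\{y^{*}\})$ is clopen in $K$ and, inheriting the absence of isolated points from $K$, is itself a copy of $2^\omega$ sitting in the slice $X\times\{y^{*}\}$. Consequently, if $D$ meets every vertical slice in a set with countable closure, then $D$ is dense on no copy of $2^\omega$: density of $D\cap K$ in $K$ would force density of $D$ on the slice-level copy $K\cap(X\times\{y^{*}\})$, making a countable set dense in $2^\omega$, which is absurd. It then suffices to build $D_1$ meeting each slice in at most one point: take a countable $\pi$-base $\{A_n\times B_m\}$ of the product and, for each pair $(n,m)$, choose a point of $A_n\times B_m$ with a \emph{fresh} $Y$-coordinate, which is possible because each $B_m$ is uncountable. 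The resulting $D_1$ is dense (it meets every $\pi$-base element) and has at most one point per slice, so $W(D_1)=\emptyset$. Comparing $D_0$ and $D_1$ via $W$ contradicts \CDH.

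The step I expect to be the main obstacle, and would verify most carefully, is the structural lemma—precisely the passage from ``$Y$ omits $2^\omega$'' to ``some fiber of $K$ is a full copy of $2^\omega$ clopen in $K$.'' Everything hinges on using the absence of isolated points in $2^\omega$ to upgrade the fiber over an isolated point of $\pi_Y[K]$ into an honest clopen Cantor set in a single slice; once this is in place, both the invariance of $W$ and the two constructions are routine.
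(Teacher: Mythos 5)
The paper does not actually prove this proposition---it is quoted from \cite{hg-arrow} without proof---so there is no in-paper argument to compare against; judged on its own, your proof is correct and is essentially the standard argument from the cited source: build one countable dense set that is dense in some Cantor set, another that meets every copy of $2^\omega$ non-densely, and observe that density-in-a-Cantor-set is preserved by any homeomorphism matching the two dense sets. The structural lemma (project $K\cong 2^\omega$ to $Y$, get a countable compact metrizable image, take the fiber over an isolated point to obtain a clopen crowded piece of $K$ inside a single slice) is exactly the right engine, and your constructions of $D_0$ and $D_1$ work. Two small justification slips should be repaired, though neither is a genuine gap. First, your claim that each $B_m$ is \emph{uncountable} is false in general (take $Y=\rationals$, which is crowded with a countable $\pi$-base and contains no copy of $2^\omega$); what the recursion needs is only that each $B_m$ is \emph{infinite}, and that does hold, since every nonempty open subset of a crowded Hausdorff (indeed $T_1$) space is infinite---at each stage only finitely many $Y$-coordinates must be avoided. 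Second, the phrase ``making a countable set dense in $2^\omega$, which is absurd'' is a misstatement, as $2^\omega$ certainly has countable dense subsets; the actual contradiction is that the closure of the slice-trace of $D$ is countable yet would have to contain the uncountable set $K\cap(X\times\{y^*\})$, or, in the form you actually use, that a set with at most one point cannot be dense in a nonempty crowded Hausdorff space. With those two sentences corrected, the proof is complete.
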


Here we will show that  the \emph{double arrow space over a saturated $\lambda'$-set} is \CDH, hence there are compact non-metrizable \CDH{} spaces in ZFC, indeed.

\smallskip

First let us fix some notation for linearly ordered spaces. Given a linearly ordered topological space $(X,<)$, a function $f:X\to X$ and $x\in X$ which is 
neither the least nor the largest element of $X$,
we will say that $f$ is \emph{monotone at} $x$ if there exist $a,b\in X$ with $a<x<b$ such that either $f[(a,x)]\subset(\leftarrow,f(x))$ and
$f[(x,b)]\subset(f(x),\rightarrow)$, or $f[(a,x)]\subset(f(x),\rightarrow)$ and $f[(x,b)]\subset(\leftarrow,f(x))$.

We consider here  $\cantorset$ as a linearly ordered topological space ordered lexicographically (or, equivalently) consider it as a subspace of $\reals$
with the induced order. Let $Q_i=\{x\in\cantorset:\exists n<\omega\ \forall m\geq n\ (x(m)=i)\}$ for $i\in 2$ and $Q=Q_0\cup Q_1$. The set $Q_0$ consists of the
least element of $\cantorset$ and all those points that have an immediate predecessor. Similarly, $Q_1$ consists of the greatest element of $\cantorset$ and
all those points that have an immediate successor.

\begin{definition}
 For each $X\subset\cantorset\setminus Q$, let $\darrow(X)$ be the set $((\cantorset\setminus Q_0)\times\{0\})\cup((Q_0\cup X)\times\{1\})$ with the order
topology given by the lexicographic order.
\end{definition}

It is easy to see that $\darrow(X)$ is a $0$-dimensional separable compact Hausdorff space of weight $|X|$.
Note that $\darrow(C)$ is homeomorphic to the Cantor set whenever $C\subset\cantorset\setminus Q$ is countable. In fact, using a 
result of Ostaszewski's \cite{arrows} it can be easily shown that every compact $0$-dimensional separable linearly ordered space without isolated points is 
homeomorphic to $\darrow(X)$ for some
$X\subset\cantorset\setminus Q$.

If $X\subset Y\subset\cantorset\setminus Q$, let $\pi_{X}\sp{Y}:\darrow(Y)\to\darrow(X)$ be the natural projection defined by
$$
\pi_{X}\sp{Y}(\pair{x,t})=\left\{
\begin{array}{rl}
\pair{x,0}&\textrm{ if }x\in Y\setminus X,\\
\pair{x,t}&\textrm{ otherwise.}
\end{array}
\right.
$$

 The following is not hard to see.

\begin{lemma}\label{lifting}
Let $X\subset Y\subset\cantorset\setminus Q$ and let $h:\darrow(X)\to \darrow(X)$ be a homeomorphism.  Assume that $h$ is monotone at each point of the form $\pair{x,0}$ with $x\in Y\setminus X$ and $h[(Y\setminus X)\times\{0\}]=(Y\setminus X)\times\{0\}$. Then there exists a homeomorphism $H:\darrow(Y)\to\darrow(Y)$ such that $\pi_{X}\sp{Y}\circ H=h\circ\pi_{X}\sp{Y}$.
\end{lemma}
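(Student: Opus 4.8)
The plan is to build $H$ by lifting $h$ fibrewise along the projection $\pi=\pi^Y_X$, using the monotonicity hypothesis to decide, over each doubled point, which of the two copies is sent where. First I would record the structure of $\pi$: for $x\in Y\setminus X$ the point $x$ carries two points $\pair{x,0}$ and $\pair{x,1}$ of $\darrow(Y)$ which are adjacent (nothing lies strictly between them, since in the lexicographic order a point between would need first coordinate $x$ and second coordinate strictly between $0$ and $1$), and $\pi$ collapses this pair to the single point $\pair{x,0}$ of $\darrow(X)$, while off these pairs $\pi$ is injective. In particular $\pi$ is a continuous weakly order-preserving surjection of compact linearly ordered spaces whose fibres are either singletons or such adjacent pairs.

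Next I would define $H$ on the two kinds of fibres. If $p=\pair{z,t}$ with $z\notin Y\setminus X$, its $\pi$-fibre is a singleton and I set $H(p)$ to be the unique $\pi$-preimage of $h(\pi(p))$; this is well defined because, using $h[(Y\setminus X)\times\{0\}]=(Y\setminus X)\times\{0\}$ and the same identity for $h^{-1}$ (automatic, as $h$ is a bijection), the point $h(\pi(p))$ again lies over a non-doubled point, so its fibre is a singleton. If instead $x\in Y\setminus X$, write $\pair{x',0}=h(\pair{x,0})$; by hypothesis $x'\in Y\setminus X$, so $\pair{x',1}$ exists in $\darrow(Y)$, and I send the pair $\{\pair{x,0},\pair{x,1}\}$ onto $\{\pair{x',0},\pair{x',1}\}$, preserving the second coordinate when $h$ is monotone increasing at $\pair{x,0}$ and swapping it when $h$ is monotone decreasing there. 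This is exactly the orientation forced by how $h$ moves the points on each side of $x$. With this definition $\pi\circ H=h\circ\pi$ is immediate on both kinds of fibres, and $H$ is a bijection, since on doubled fibres it permutes adjacent pairs the way $h$ permutes their images and elsewhere it mirrors the bijection $h$.

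The hard part will be continuity of $H$, and this is the one place the monotonicity hypothesis is essential. Working with nets to avoid metrizability issues, I would take a net $p_d\to p$ and show $H(p_d)\to H(p)$ by controlling convergent subnets. Applying $\pi$ and using $\pi H=h\pi$ with continuity of $h$ and $\pi$ gives $\pi(H(p_d))\to\pi(H(p))$, so any convergent subnet of $(H(p_d))$ has limit in the fibre over $\pi(H(p))$; when that fibre is a singleton we are done. The delicate case is when $H(p)$ is one copy of a doubled point, say the increasing case $H(p)=\pair{x',0}$ with $p=\pair{x,0}$, where I must rule out a subnet converging to the wrong copy $\pair{x',1}$. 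Convergence to $\pair{x',1}$ forces the subnet eventually $>\pair{x',0}$, hence $\pi(H(p_d))=h(\pi(p_d))\ge\pair{x',0}$; but $p_d\to\pair{x,0}$ forces $p_d$ eventually $\le\pair{x,0}$, so $\pi(p_d)$ eventually lies in an interval $(a,\pair{x,0}]$ on which increasing monotonicity of $h$ puts $h(\pi(p_d))<\pair{x',0}$ unless $\pi(p_d)=\pair{x,0}$ — and in that last case $p_d=\pair{x,0}=p$, giving $H(p_d)=\pair{x',0}$, not $\pair{x',1}$. Either way a contradiction, and the decreasing case and the other copy are symmetric.

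Finally, once continuity is in hand, $H$ is a continuous bijection of the compact Hausdorff space $\darrow(Y)$, hence a homeomorphism, and $\pi^Y_X\circ H=h\circ\pi^Y_X$ holds by construction, as required. The main obstacle is precisely the continuity verification at the doubled points, which is exactly what the two hypotheses on $h$ — preservation of $(Y\setminus X)\times\{0\}$ and monotonicity there — are designed to make possible.
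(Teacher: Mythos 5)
Your proof is correct: the fibrewise definition of $H$ (preserving or swapping the two copies over a doubled point according to whether $h$ is monotone increasing or decreasing at it), the observation that invariance of $(Y\setminus X)\times\{0\}$ under $h$ makes singleton fibres go to singleton fibres, the continuity check via convergent subnets using the adjacency of $\pair{x,0}$ and $\pair{x,1}$, and the final appeal to compactness to upgrade a continuous bijection to a homeomorphism are all sound. The paper offers no proof of this lemma at all (it is introduced with ``The following is not hard to see''), so your argument simply supplies the omitted details, and it is the natural construction the authors evidently had in mind.
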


\begin{proposition}\label{homeoinCantor}
Let $D_0$ and $D_1$ be countable dense subsets of $\cantorset$ and  let $W\subset\cantorset$ be such that $W\cap(D_0\cup D_1\cup Q)=\emptyset$ and $W\cup D_0\cup D_1$ is a $\lambda$-set. Furthermore, assume that the following condition holds.
\begin{quote}$(\ast)$
For every two non-empty clopen intervals $I,J$ of $\cantorset$, there is an order isomorphism $f:I\to J$ such that $f[I\cap W]=J\cap W$.
\end{quote}
Then there is a homeomorphism $h:\cantorset\to\cantorset$ such that $h[D_0]=D_1$, $h[W]=W$ and $h$ is monotone at each point of $W$.
\end{proposition}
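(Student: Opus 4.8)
The plan is to repeat the recursive construction in the proof of Theorem~\ref{main-thm} almost verbatim, but with the linear order of $\cantorset$ woven into every stage, so that the resulting homeomorphism not only preserves $W$ but is in addition locally orientation-respecting at the points of $W$. Put $Z=W\cup D_0\cup D_1$, which is a $\lambda$-set by hypothesis. As $D_0\cup D_1$ is countable it is a relative $G_\delta$ in $Z$, say $D_0\cup D_1=Z\cap\bigcap_n U_n$ with each $U_n$ open in $\cantorset$ and, necessarily, $D_0\cup D_1\subseteq U_n$. Since $D_0$ is dense, every $U_n$ is a dense open set, whence $K_n=\cantorset\setminus U_n$ is closed and nowhere dense, $K_n\cap(D_0\cup D_1)=\emptyset$, and $W=Z\cap\bigcup_n K_n$. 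In the construction these $K_n$ take over the role of the sets $K_n$ of Theorem~\ref{main-thm}: the set $W$ now plays the role of the ``frozen'' set $Y$, while $D_0\cup D_1$ plays the role of the matched set $E$. I enumerate $D_0\cup D_1=\{d_k:k<\omega\}$ and process its points one at a time, choosing at stage $n$ the partner $e$ of $d_n$ so that $e\in D_1$ exactly when $d_n\in D_0$ and $e\in D_0$ exactly when $d_n\in D_1$; this back-and-forth bookkeeping forces $h[D_0]=D_1$.

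Two modifications turn the construction of Theorem~\ref{main-thm} into the present one. First, I require every clopen piece occurring in the covers $\I_n^i$ to be a clopen \emph{interval} (the basic cones $[s]$ are such intervals and suffice), and I require every building-block bijection between a piece and its partner to be an \emph{order isomorphism} carrying the $W$-part of its domain onto the $W$-part of its range; such order isomorphisms are available precisely because of hypothesis $(\ast)$, and they replace the tail-respecting maps $h_{s,t}$ of Theorem~\ref{main-thm}. Second, I require the KR-covers to be chosen \emph{order-coherently relative to $W$}: whenever we extend across a closed nowhere dense set, the pieces lying on a given side of a point $w\in W$ are matched to pieces lying on the corresponding side of its image, so that the combination map keeps apart the two sides of each such $w$. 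With these provisions the verification of the analogues of (a)--(h), the fact that $(h_n)_n$ and $(h_n^{-1})_n$ are Cauchy, and the conclusions $h[D_0]=D_1$ and $h[W]=W$ go through as before; in particular $h[W]=W$ because the building blocks preserve $W$ and because every $w\in W$ eventually lies in some $G_n^{0}$, on which $h=h_n$ and $h_n[W]=W$.

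To see that $h$ is monotone at each $w\in W$, note first that, since $w\notin D_0\cup D_1$, the point $w$ is never a matched point; hence at every stage $n$ it lies in a clopen-interval piece $P_n$ with $P_{n+1}\subseteq P_n$, $\diam P_n\to 0$, and $h(w)\in\alpha(P_n)$. Order-coherence relative to $W$ guarantees that $h$ carries $P_n\cap(\leftarrow,w)$ into $(\leftarrow,h(w))$ and $P_n\cap(w,\rightarrow)$ into $(h(w),\rightarrow)$. Choosing $a,b\in P_n$ with $a<w<b$ therefore witnesses that $h$ is monotone at $w$. Observe that I do \emph{not} claim $h$ to be order-preserving on a whole neighborhood of $w$: monotonicity at $w$ only asks that the two sides of $w$ be kept apart, which is exactly what order-coherence relative to $W$ supplies and exactly what is needed to feed Lemma~\ref{lifting}.

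The main obstacle is the construction of these order-coherent KR-covers, i.e.\ an order-sensitive strengthening of Lemma~\ref{fons}: at each stage one must produce covers by clopen intervals whose combination map is not only continuous but also side-preserving at every point of $W$ that meets the relevant closed set, while remaining free to ``scramble'' the order in the finitely many pieces abutting the matched point $d_n$. This freedom is essential and cannot be dispensed with, for a globally order-coherent $h$ would be an order automorphism of $\cantorset$, and since $D_0$ and $D_1$ may sit quite differently with respect to the jump points $Q$ (the proposition permits $D_i\cap Q\neq\emptyset$), no such automorphism need exist. The delicate step is thus to choose, each time a point $d_n$ is processed, its partner $e$ inside $\alpha_n(I_0)$ at a location matching the position of $d_n$ relative to $W$ --- this is where $(\ast)$ is used once more --- so that sidedness at every $w\in W$ survives in the limit. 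Granting this, the remainder is the bookkeeping of Theorem~\ref{main-thm}.
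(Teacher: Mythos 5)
Your overall architecture is the same as the paper's: run the KR-cover recursion of Theorem \ref{main-thm} with all pieces clopen intervals, all building-block maps taken to be $W$-preserving order isomorphisms supplied by $(\ast)$, and an added ``sidedness'' requirement at points of $W$. But there is a genuine gap exactly where the paper's proof does its real work: you never construct the order-coherent KR-covers --- you name them as the main obstacle and then write ``Granting this, the remainder is the bookkeeping.'' The paper resolves this obstacle by a concrete mechanism, encoded in its conditions (e)--(i): sidedness at a point $w\in W$ is \emph{created at the stage at which $w$ is frozen}, i.e.\ absorbed into $G_{n+1}^{0}$ via $K_n$, because at that stage the new cover inside the old piece $J\ni w$ is matched simply by $h_n\!\!\restriction_J$, which is an order isomorphism by (g); the sidedness is then \emph{preserved at all later stages} by the nestedness condition (h), which forces every refined piece to land inside the image of the piece it refines. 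Crucially, the scrambling needed to move $d_n$ onto its partner $e$ is confined to an interval $[a_0,a_1]$ chosen \emph{disjoint from} $G_n^{0}\cup K_n$, so it never disturbs a point of $W$ that already has, or is just acquiring, a sidedness commitment; points of $W$ inside $[a_0,a_1]$ are frozen only at later stages and have no commitments yet. This also shows that your ``delicate step'' is misidentified: nothing requires $e$ to sit ``at a location matching the position of $d_n$ relative to $W$''; what matters is where the scrambled region sits relative to the already-frozen part of $W$, and $(\ast)$ is needed only to produce the $W$-preserving order isomorphisms between the pieces around $d_n$ and those around $e$.

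Your verification of monotonicity is, moreover, inconsistent with your own setup. You argue that each $w\in W$ ``at every stage lies in a clopen-interval piece $P_n$'' with diameters tending to $0$. But in your construction, as in the paper's, $W\subseteq\bigcup_n K_n$ and $K_n$ is absorbed into $G_{n+1}^{0}$ and $G_{n+1}^{1}$; this freezing is precisely what yields $h[W]=W$, since at a point that stays in pieces forever the limit map has no reason to take a value in $W$ (the sets $\alpha_n(P_n)\cap W$ are nonempty but their intersection point need not lie in $W$, as $W$ is not closed). Hence every $w\in W$ leaves the piece structure at a finite stage, and the shrinking-pieces argument does not apply to it; one needs instead the freezing-stage-plus-nestedness argument above, which is exactly the paper's condition (i). A smaller issue: your bookkeeping rule (one partner per point of $D_0\cup D_1$, with $e\in D_1$ iff $d_n\in D_0$) only guarantees $h[D_0]\subseteq D_1$; for surjectivity each $d_n$ must be handled on \emph{both} sides, absorbed into the domain set $G_{n+1}^{0}$ with image in $D_1$ and into the range set $G_{n+1}^{1}$ with preimage in $D_0$, which is what the paper's separate $i=0$ and $i=1$ refinements at each stage accomplish. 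All of this is repairable, but as written the proposal postpones, rather than supplies, the new content of the proposition.
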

\begin{proof}
Start by fixing some metric for $\cantorset$. Since $W\cup D_0\cup D_1$ is a $\lambda$-set, there exists a collection $\{K_n:n<\omega\}$ of closed nowhere dense subsets of $\cantorset$ such
that $W\subset\bigcup\{K_n:n<\omega\}$ and $(D_0\cup D_1)\cap K_n=\emptyset$ for each $n<\omega$. Let $D_0=\{d_n\sp0:n<\omega\}$, $D_1=\{d_n\sp1:n<\omega\}$.

We will contruct the homeomorphism we are looking for by defining a sequence of autohomeomorphisms of $\cantorset$. In step $n<\omega$, we will construct a
homeomorphism $h_n:\cantorset\to \cantorset$ and a pair of closed nowhere dense sets $G_n\sp{0}$ and $G_n\sp{1}$ that have the following properties.
\begin{itemize}
\item[(a)] For each $i\in 2$, $G_n\sp{i}\cup K_n\subset G_{n+1}\sp{i}$.
\item[(b)] For each $i\in 2$, $D_i\cap G_n\sp{i}$ is finite and contains $\{d_k\sp{i}:k< n\}$.
\item[(c)] $h_n[G_n\sp0]=G_n\sp1$, $h_n[D_0\cap G_n\sp0]=D_1\cap G_n\sp1$ and $h_n[W]=W$.
\item[(d)] For each $k<n$, $h_n\!\!\restriction_{G_k\sp0}=h_k$.
\end{itemize}

We will also need a KR-cover $\langle \I_n\sp{0},\I_n\sp{1},\alpha_n\rangle$ for $\langle \cantorset\setminus G_n\sp{0},\cantorset\setminus G_n\sp{1},h_n\!\!\restriction_{G_n\sp{0}}\rangle$. In order  to ensure the monotonicity of $h$ at all points of $W$, we will construct these KR-covers with the following additional properties.

\begin{itemize}
\item[(e)] Given $i\in2$, if $I\in\I_n\sp{i}$, then $I$ is a clopen interval of the form $[q_0,q_1]$ where $q_j\in Q_j$ for $j\in2$ and $I$ has diameter $<\frac{1}{n+1}$.
\item[(f)] For all $i\in2$, $\I_{n+1}\sp{i}$ refines $\I_n\sp{i}$; that is, every element of $\I_{n+1}\sp{i}$ is contained in an element of $\I_n\sp{i}$.
\item[(g)] If $I\in\I_n\sp{0}$, then $h_n[I]=\alpha_n(I)$, $h_n[I\cap W]=\alpha_n(I)\cap W$ and $h_n\!\!\restriction_{I}$ is an order isomorphism.
\item[(h)] Given $m<n$, if $I\in\I_{m}\sp{0}$ and $J\in\I_{n}\sp{0}$ then $J\subset I$ if and only if $\alpha_n(J)\subset\alpha_m(I)$.
\item[(i)] Given $x\in W$, let $k=\min\{m<\omega:x\in G_m\sp{0}\}$. Then there are $a,b\in Q$ with $a<x<b$ such that if $n\geq k$, $I,J\in\I_n\sp{0}$, $I\subset[a,x]$ and $J\subset[x,b]$ then $\alpha_n(I)\subset(\leftarrow,h_m(x)]$ and $\alpha_n(J)\subset[h_m(x),\rightarrow)$.
\end{itemize}

To start with the construction, let $G_0\sp0=G_0\sp1=\emptyset$, $\I_0\sp0=\I_0\sp1=\{\cantorset\}$, $\alpha_0=\{\pair{\cantorset,\cantorset}\}$ and let $h_0$ be the identity function. 

In step $n+1$ of the construction, first notice that the homeomorphism $h_n:\cantorset\to \cantorset$ fixes $W$ and is monotone at every point of $W$ by (g) and (i). Now, let $G=G_n\sp0\cup K_n\cup\{d_n\sp0\}$. The point $d\sp{0}_n$ may or may not be already contained in $G_n$. We will describe the construction in the case that $d_n\sp0\notin G_n$ and $d\sp{0}_n$ does not have an immediate predecessors or successors, the other cases can be treated in an analogous way. Let $I_0\in\I_n\sp{0}$ be such that $d_n\sp0\in I_0$.

If $J\in\I_n\sp{0}\setminus\{I_0\}$ then partition $J$ into finitely many clopen intervals with endpoints contained in $Q\setminus G$. This is indeed possible because $Q$ is dense in the dense open subset $J\setminus G$ of $J$ and the endpoints of $J$ are already in $Q$.  Call this collection $\I(J,0)$. Let $\I(J,1)=\{h_n[K]:K\in\I(J,0)\}$ and let $\alpha_J:\I(J,0)\to\I(J,1)$ be such that $\alpha_J(K)=h_n[K]$ for all $K\in\I(J,0)$. Notice that by (g), $\I(J,1)$ will also consist of clopen intervals.

Since $G_n\sp{0}\cup K_n$ intersects $I_0$ in a closed subset that does not contain $d\sp{0}_n$, and $d\sp{0}_n$ does not have immediate predecessors or successors, there are $a_j\in Q_j\cap I_0$ for $j\in 2$ such that $d\sp{0}_n\in(a_0,a_1)$ and $[a_0,a_1]\cap (G_n\sp0\cup K_n)=\emptyset$. Let $b_j=h_n(a_j)$ for $j\in 2$, notice that $b_j\in Q_j$ for $j\in 2$ because $h_n$ is an order isomorphism. Notice that this guarantees that $[a_0,a_1]$ and $[b_0,b_1]$ are clopen intervals. Choose an $e\in(b_0,b_1)\cap D_1$ that has no immediate predecessors or successors.

Partition $[a_0,a_1]\setminus\{d\sp{0}_n\}=\bigcup\{U\sp{i}_m:m<\omega,i\in 2\}$ and $[b_0,b_1]\setminus\{e\}=\bigcup\{V\sp{i}_m:m<\omega,i\in 2\}$ so that

\begin{itemize}
\item[(1)] if $k<\omega$ and $i\in 2$, $U\sp{i}_m$ and $V\sp{i}_m$ are non-empty clopen intervals of diameter $<\frac{1}{n+2}$,
\item[(2)] if $k<\omega$, $p\in U\sp0_m$, $q\in U\sp0_{m+1}$, $r\in U\sp1_m$ and $s\in U\sp1_{m+1}$, then $p<q<d\sp{0}_n<s<r$, and
\item[(3)] if $k<\omega$, $p\in V\sp0_m$, $q\in V\sp0_{m+1}$, $r\in V\sp1_m$ and $s\in V\sp1_{m+1}$, then $p<q<e<s<r$.
\end{itemize}
Let
\begin{eqnarray}
\I(I_0,0,0)&=\{U\sp{i}_m:m<\omega,i\in 2\}&\textrm{ and}\nonumber\\
\I(I_0,1,0)&=\{V\sp{i}_m:m<\omega,i\in 2\}.&\nonumber
\end{eqnarray}

Also, the set $I_0\setminus[a_0,a_1]$ can be partitioned into finitely many clopen intervals with diameter $<\frac{1}{n+2}$, call this collection $\I(I_0,0,1)$. 
Let $\I(I_0,1,1)=\{h_n[K]:K\in\I(I_0,0,1)\}$ and let $\I(I_0,i)=\I(I_0,i,0)\cup\I(I_0,i,1)$ for $i\in 2$.

Next we define a bijection $\beta:\I(I_0,0)\to\I(I_0,1)$ as follows: For $m<\omega$ and $i\in 2$ let  $\beta(U\sp{i}_m)=V\sp{i}_m$, and for $K\in \I(I_0,0,1)$ let $\beta(K)=h_n[K]$.

Put
\begin{eqnarray}
\I\sp{j}&=&\bigcup\{\I(J,j):J\in\I_n\sp{j}\},\textrm{ for }j\in 2\textrm{ and}\nonumber\\
\alpha&=&\beta\cup\Big(\bigcup\{\alpha_J:J\in\I_n\sp{0}\setminus\{I_0\}\}\Big),\nonumber
\end{eqnarray}
so $\alpha:\I\sp{0}\to\I\sp{1}$ is a bijection. We also define a homeomorphism $H:\cantorset\to \cantorset$. For each $m<\omega$ and $i\in 2$ we use property $(\ast)$ to find an order isomorphism $f\sp{i}_m:U\sp{i}_m\to V\sp{i}_m$. Then we let
$$
H(x)=\left\{
\begin{array}{ll}
h_n(x), & \textrm{ if } x\in \cantorset\setminus[a_0,a_1],\\
e, & \textrm{ if }x=d\sp0_n,\\
f\sp{i}_m(x), & \textrm{ if } x\in U\sp{i}_m, \textrm{ for some }m<\omega, i\in2.
\end{array}
\right.
$$

Then $H:\cantorset\to\cantorset$ is a homeomorphism. Notice that $(\I\sp0,\I\sp1,\alpha)$ is a KR-cover for $(\cantorset\setminus G,\cantorset\setminus H[G],H\!\!\restriction_{G})$. Moreover, it is not hard to see that $G$, $\alpha$, $\I\sp0$, $\I\sp1$ and $H$ have the desired properties from the list (a) to (i) when $i=0$, but perhaps these properties do not hold for $i=1$. In order to finish step $n+1$, we have to do another refinement to both the homeomorphism and the covers so that (a) to (i) hold when $i=1$ too. This is entirely analogous to what we have done so we leave the details to the reader.

Finally, we will show that there is a homeomorphism $h:\cantorset\to \cantorset$ that extends $h_n\!\!\restriction_{G_n\sp{0}}$ for every $n<\omega$.  
Assume that $x\notin\bigcup\{G_n\sp0:n<\omega\}$. For every $n<\omega$, let $I_n^x$ be the unique element of $\I_n\sp0$ that contains $x$. 
It is easy to check that $\bigcap\{\alpha_n(I_n^x):n<\omega\}=\{y\}$ for some $y\in\cantorset$. Let $h(x)=y$. 
We leave the verification that this indeed  defines a homeomorphism to the reader. 
In this way, by properties (a), (b) and (c), we immediately obtain that $h[W]=W$ and $h[D_0]=D_1$.
\end{proof}

\begin{theorem}\label{compact} Let $Y\subset\cantorset\setminus Q$ be a saturated $\lambda\sp\prime$-set. 
Then $X=\darrow(Y)$ is a compact linearly ordered \CDH{} space of weight $|Y|$.

\end{theorem}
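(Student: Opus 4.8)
The plan is to fix countable dense sets $\mathcal D_0,\mathcal D_1\subseteq\darrow(Y)$ and to build the required autohomeomorphism of $\darrow(Y)$ by descending to $\cantorset$, applying Proposition~\ref{homeoinCantor}, and lifting back through Lemma~\ref{lifting}. Writing $\pi=\pi_\emptyset^Y\from\darrow(Y)\to\darrow(\emptyset)$ for the projection that unsplits every doubled point, and recalling that $\darrow(\emptyset)$ is order-isomorphic, hence homeomorphic, to $\cantorset$, I would run a recursion in the spirit of Proposition~\ref{homeoinCantor} with $W=Y$: an increasing sequence of closed nowhere dense sets together with homeomorphisms $h_n\from\cantorset\to\cantorset$ and refining KR-covers $\pair{\I_n^0,\I_n^1,\alpha_n}$ all of whose pieces are clopen \emph{intervals}, each $h_n$ being an order-isomorphism on pieces and monotone at the points of $Y$. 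The limit $h=\lim_n h_n$ then satisfies $h[Y]=Y$ and is monotone at every point of $Y$, so Lemma~\ref{lifting} (with $X=\emptyset$) produces a homeomorphism $H$ of $\darrow(Y)$ with $\pi\circ H=h\circ\pi$; the goal of the recursion is to arrange $H[\mathcal D_0]=\mathcal D_1$.

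Two hypotheses of Proposition~\ref{homeoinCantor} must be verified for $W=Y$, and both are supplied by the hypotheses on $Y$. The $\lambda$-set requirement — that $Y\cup D_0\cup D_1$ be a $\lambda$-set for the relevant countable $D_0,D_1\subseteq\cantorset$ — holds because $Y$ is a $\lambda'$-set: by Sierpi\'nski's observation a countable union of $\lambda'$-sets is a $\lambda'$-set, so $Y$ together with countably many singletons is again a $\lambda'$-set, hence a $\lambda$-set. Condition $(\ast)$ is exactly where \emph{saturation} enters: given clopen intervals $I,J$, refine both into the same finite number of consecutive cones $I=\bigcup_{i\le k}[s_i]$, $J=\bigcup_{i\le k}[t_i]$ and glue the canonical maps $h_{s_i,t_i}$; the resulting order-isomorphism $I\to J$ respects $\sim$ piecewise, and since $Y$ is saturated it carries $I\cap Y$ onto $J\cap Y$. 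Thus $(\ast)$ holds, and the simple part of the traces can be taken inside $\cantorset\setminus(Y\cup Q)$ so that $W\cap(D_0\cup D_1\cup Q)=\emptyset$.

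The crux — and what I expect to be the main obstacle — is that a lifted homeomorphism cannot alter the \emph{split type} of a point. Being a doubled point over $Y$ is detected by $\pi$, so any $H$ with $\pi\circ H=h\circ\pi$ and $h[Y]=Y$ must send doubled points to doubled points and simple points to simple points, and therefore preserves the number of split points met by a dense set. But two countable dense subsets of $\darrow(Y)$ may meet the split pairs $\{\pair{x,0},\pair{x,1}\}$ ($x\in Y$) completely differently — one may avoid all of them (its trace lying over $\cantorset\setminus(Y\cup Q)$), while the other contains both copies of each point of a countable dense $S\subseteq Y$ — so no purely lifted map can carry one to the other. Overcoming this forces homeomorphisms that interchange simple and split points, that is, which do \emph{not} descend through $\pi$; these are precisely what saturation makes available, since for a saturated $Y$ every clopen interval $[s]\cap\darrow(Y)$ is homeomorphic to $\darrow(Y)$ itself via $h_{s,t}$-type maps respecting $Y$, giving the space enough self-similarity to move a simple point to a split pair and back.

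Accordingly, I would first use such saturation-driven homeomorphisms to normalize each of $\mathcal D_0,\mathcal D_1$ to a single standard form — the union of both copies of a fixed countable dense $S\subseteq Y$ with a fixed countable dense set of simple points over $\cantorset\setminus(Y\cup Q)$ — and only then invoke the projection/Proposition~\ref{homeoinCantor}/Lemma~\ref{lifting} machinery to match the two standardized sets, whose traces now have matching split type so that $(\ast)$ together with the monotonicity at points of $Y$ (property~(i) of Proposition~\ref{homeoinCantor}) makes the recursion lift, the two copies over each split point being routed consistently by $H$. Once the recursion exhausts $\mathcal D_0$ and covers $\mathcal D_1$, the Cauchy sequences $(h_n)$ and $(h_n^{-1})$ converge to the desired $h$, and its lift $H$ satisfies $H[\mathcal D_0]=\mathcal D_1$. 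Since $\darrow(Y)$ is already a $0$-dimensional separable compact linearly ordered space of weight $|Y|$, producing such an $H$ for every pair establishes that it is \CDH, and in particular yields non-metrizable compact \CDH\ spaces whenever $Y$ is uncountable.
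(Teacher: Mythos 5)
You correctly isolate the central difficulty: any homeomorphism $H$ of $\darrow(Y)$ obtained by lifting a homeomorphism $h$ of a quotient along the projection carries fibers to fibers, hence preserves the split/simple distinction, while two countable dense subsets of $\darrow(Y)$ may meet the split pairs completely differently. But your fix --- first ``normalizing'' $\mathcal{D}_0$ and $\mathcal{D}_1$ by homeomorphisms of $\darrow(Y)$ that interchange simple and split points --- is exactly where the proposal breaks down. The maps you invoke for this, the ``saturation-driven'' $h_{s,t}$-type homeomorphisms, respect the tail equivalence $\sim$ by construction; since $Y$ is saturated they carry $Y$ onto $Y$ and $\cantorset\setminus Y$ onto $\cantorset\setminus Y$, so their induced actions on $\darrow(Y)$ send split pairs to split pairs and simple points to simple points. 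They can never move a simple point onto a split pair. Indeed, the existence of \emph{any} autohomeomorphism of $\darrow(Y)$ interchanging the two types of points is itself an instance of the \CDH{} property being proved, so the normalization step is circular: it carries all of the difficulty, and no tool that could perform it has been exhibited. There is also a secondary obstruction to your architecture: with $W=Y$ and the quotient $\darrow(\emptyset)\cong\cantorset$, Proposition \ref{homeoinCantor} requires $W\cap(D_0\cup D_1\cup Q)=\emptyset$, so it cannot be applied once the traces of the dense sets meet $Y$; and Lemma \ref{lifting} with $X=\emptyset$ forces the downstairs homeomorphism to satisfy $h[Y\times\{0\}]=Y\times\{0\}$ and to be monotone at \emph{every} point of $Y$, which is incompatible with sending the coordinate of a split point of $\mathcal{D}_0$ to the coordinate of a simple point of $\mathcal{D}_1$.

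The paper dissolves the split/simple mismatch by a different choice of quotient, not by normalization. Put $G=\{x\in Y:\pair{x,i}\in D\cup E\textrm{ for some }i\in 2\}$, $F=G^{\ast}$ (countable, saturated, and $F\subseteq Y\subset\cantorset\setminus Q$ because $Y$ is saturated), and $Z=Y\setminus F$. One projects only down to $\darrow(F)$, i.e., one unsplits only the points lying over $Z$: then $\pi_{F}\sp{Y}$ restricted to $D\cup E$ is the identity, and $\darrow(F)$ is metrizable --- hence a Cantor set --- because $F$ is countable. Inside this Cantor set the jump structure is not a topological invariant, so Proposition \ref{homeoinCantor}, applied with $W=Z\times\{0\}$ (using that $Z$ is a saturated $\lambda'$-set to verify both the $\lambda$-set hypothesis and condition $(\ast)$), produces $h$ with $h[D]=E$ even when one dense set consists of split points and the other of simple ones; preservation of $W$ and monotonicity at its points then let Lemma \ref{lifting} lift $h$ to $H$ with $\pi_{F}\sp{Y}\circ H=h\circ\pi_{F}\sp{Y}$, whence $H[D]=E$. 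The missing idea in your proposal is precisely this intermediate quotient $\darrow(F)$ determined by the saturated hull of the split coordinates of $D\cup E$; once it is used, no normalization is needed at all.
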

\begin{proof} To show that $X$ is CDH, let $D$ and $E$ be countable dense subsets of $\darrow(Y)$.

Define $G=\{x\in Y:\pair{x,i}\in D\cup E\textrm{ for some }i\in 2\}$, $F=G\sp\ast$ and $Z=Y\setminus F$. Notice that $Z$ is a saturated
$\lambda\sp\prime$-set in $\cantorset$ and $Z\cap Q=\emptyset$. Consider the space $\darrow(F)$. Note that
$\pi_{F}\sp{Z}:X\to\darrow(F)$ is the identity when restricted to $D\cup E$. Let $D\sp\prime=\pi_{F}\sp{Z}[D]$ and $E\sp\prime=\pi_{F}\sp{Z}[E]$,
these are countable dense subsets of $\darrow(F)$.

We would like to find a homeomorphism of $\darrow(F)$ that can be lifted to a homeomorphism of $X$ using Lemma \ref{lifting}. Thus, let us argue that we can use Proposition \ref{homeoinCantor} to find an appropriate autohomeomorphism of the Cantor set $\darrow(F)$. Let $W=Z\times2$.

Clearly, $Z$ is a $\lambda\sp\prime$-set in $\cantorset$ and from this it is easy to prove that $W$ is a $\lambda\sp\prime$-set in $\darrow(F)$. Notice that the set $(F\times2)\cup (Q_0\times\{1\})\cup(Q_1\times\{0\})$ is the set of all the points of $\darrow(F)$ with no immediate succesor or no immediate predecessor along with the least and greatest elements of $\darrow(F)$, hence $W$ does not contain any of these points. Also, notice that $D\sp\prime$ and $E\sp\prime$ do not intersect $W$.

Then we are left to prove that the condition $(\ast)$ in Proposition \ref{homeoinCantor} holds. Let $I$ and $J$ be two clopen intervals of
$\darrow(F)$. Then there are $a,b,c,d\in\cantorset$ with $a<b$, $c<d$, $I=[\pair{a,1},\pair{b,0}]$ and $J=[\pair{c,1},\pair{d,0}]$. Notice that $a$ and $c$ have no immediate successor, and $b$ and $d$ have no immediate predecessor. Therefore, there are 
$\{s_n:n\in\Z\}\cup\{t_n:n\in\Z\}\subset\cantortree$ such that
\begin{itemize}
\item[(i)] $(a,b)=\bigcup\{[s_n]:n\in\Z\}$,
\item[(ii)] $(c,d)=\bigcup\{[t_n]:n\in\Z\}$,
\item[(iii)] if $n,m\in\Z$, $n<m$, $x\in[s_n]$ and $y\in[s_m]$, then $x<y$, and
\item[(iv)] if $n,m\in\Z$, $n<m$, $x\in[t_n]$ and $y\in[t_m]$, then $x<y$.
\end{itemize}
Define $g:[a,b]\to[c,d]$ by
$$
g(x)=\left\{
\begin{array}{ll}
c, & \textrm{ if }x=a,\\
d, & \textrm{ if }x=b,\\
h_{s_n,t_n}(x), & \textrm{ if }n\in\Z\textrm{ and }x\in[s_n].
\end{array}
\right.
$$
for all $x\in[a,b]$. Then $g$ is an order isomorphism and, since $F$ and $Z$ are saturated, $g[F\cap[a,b]]=F\cap[c,d]$ and $g[Z\cap[a,b]]=Z\cap[c,d]$.
Define $f:I\to J$ as $f(\pair{x,t})=\pair{g(x),t}$ for all $\pair{x,t}\in I$, this function is well-defined, and is, in fact, an order isomorphism such that
$f[I\cap W]=J\cap W$.

Thus, using Proposition \ref{homeoinCantor}, there is a homeomorphism $h:\darrow(F)\to\darrow(F)$ such that $h[D\sp\prime]=E\sp\prime$ and $h$ is monotone
at each point of $W$. By Lemma \ref{lifting}, there is a homeomorphism $H:X\to X$ such that $\pi_{F}\sp{Z}\circ H=h\circ \pi_{F}\sp{Z}$, so $H[D]=E$. 
\end{proof}

Recall that every set of size less than $\mathfrak b$ is a  $\lambda'$-set. So,

\begin{corollary} There exists a linearly ordered, compact, $0$-dimensional CDH space of weight $\kappa$ for any $\kappa$ of 
size less than $\mathfrak b$.
\end{corollary}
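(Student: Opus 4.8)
The plan is to reduce the statement directly to Theorem~\ref{compact}: for each uncountable cardinal $\kappa<\mathfrak b$ it suffices to produce a saturated $\lambda'$-set $Y\con\cantorset\setminus Q$ with $|Y|=\kappa$, since feeding such a $Y$ into Theorem~\ref{compact} yields a compact, linearly ordered, $0$-dimensional \CDH{} space $\darrow(Y)$ of weight $|Y|=\kappa$. So the whole task is the construction of $Y$.

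First I would choose an arbitrary set $A\con\cantorset\setminus Q$ with $|A|=\kappa$; this is possible because $Q$ is countable while $|\cantorset\setminus Q|=\cont\ge\kappa$. Since $|A|=\kappa<\mathfrak b$, the set $A$ is a $\lambda'$-set by Rothberger's result recalled just before the corollary. I would then saturate: by Lemma~\ref{lambda}(1) the saturation $Y:=A^*$ is again a $\lambda'$-set, and it is saturated by construction. Thus $Y$ is already a saturated $\lambda'$-set, and only two side conditions remain to be checked before Theorem~\ref{compact} applies, namely $Y\con\cantorset\setminus Q$ and $|Y|=\kappa$.

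For the first condition the key observation is that $Q$ is $\sim$-saturated: if $x$ is eventually constant and $y\sim x$, then $y$ is eventually constant with the same eventual value, so in fact $Q_0$ and $Q_1$ are each a single $\sim$-equivalence class and $Q=Q_0\cup Q_1$ is a union of equivalence classes. Consequently, if some $z\in A^*\cap Q$ existed, then $z\sim a$ for some $a\in A$, and saturation of $Q$ would force $a\in Q$, contradicting $A\cap Q=\emptyset$; hence $A^*\cap Q=\emptyset$, i.e. $Y\con\cantorset\setminus Q$. For the cardinality, each $\sim$-class is countable, so $|Y|=|A^*|\le|A|\cdot\aleph_0=\kappa$, while trivially $|Y|\ge|A|=\kappa$; thus $|Y|=\kappa$.

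I expect there to be no genuine obstacle here, only one point requiring care: the verification that passing to the saturation $A^*$ neither pushes $Y$ into $Q$ nor alters its cardinality. Everything else is a direct invocation of Theorem~\ref{compact}, Lemma~\ref{lambda}(1), and Rothberger's theorem that sets of size below $\mathfrak b$ are $\lambda'$-sets. Once $Y$ has been produced, Theorem~\ref{compact} delivers the conclusion immediately.
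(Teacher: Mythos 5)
Your proposal is correct and follows exactly the route the paper intends (the paper leaves the proof implicit after recalling Rothberger's result): take a set of size $\kappa$ inside $\cantorset\setminus Q$, saturate it via Lemma~\ref{lambda}(1), and feed it into Theorem~\ref{compact}. The details you supply --- that $Q_0$ and $Q_1$ are single $\sim$-classes so saturation stays disjoint from $Q$, and that countable classes preserve the cardinality $\kappa$ --- are precisely the checks the paper glosses over, and they are verified correctly.
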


More importantly, there is a $\lambda'$-set of size $\aleph_1$ (see \cite{Miller84}). Hence,

\begin{theorem} There exists a linearly ordered, compact, $0$-dimensional CDH space of weight $\omega_1$.
\end{theorem}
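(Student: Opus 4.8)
The plan is to deduce this as a bookkeeping corollary of Theorem~\ref{compact}: it suffices to produce, in ZFC, a saturated $\lambda'$-set $Y\subseteq\cantorset\setminus Q$ with $|Y|=\omega_1$, and then the space $\darrow(Y)$ does everything we want. The input is the cited fact (Miller~\cite{Miller84}) that there is a $\lambda'$-set $X_0\subseteq\cantorset$ with $|X_0|=\aleph_1$; note that this is what upgrades the preceding corollary from ``size $<\mathfrak b$'' to the unconditional value $\omega_1$. The remaining task is purely to massage $X_0$ so that it becomes saturated and avoids $Q$, without destroying its $\lambda'$-property or its cardinality.

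First I would pass to the saturation $X_0^\ast$. By Lemma~\ref{lambda}(1) this is again a $\lambda'$-set, and since each $\sim$-class is countable while $|X_0|=\aleph_1$, we get $|X_0^\ast|\le\aleph_1\cdot\aleph_0=\aleph_1$, and $\ge\aleph_1$ because $X_0\subseteq X_0^\ast$; hence $|X_0^\ast|=\aleph_1$. Next I would set $Y=X_0^\ast\setminus Q$. Three things need checking, all of them short. Being a subset of the $\lambda'$-set $X_0^\ast$, the set $Y$ is itself $\lambda'$: if a countable $Z$ is relative $G_\delta$ in $X_0^\ast\cup Z$ via $\bigcap_n U_n$, then the same $\bigcap_n U_n$ exhibits $Z$ as relative $G_\delta$ in $Y\cup Z$, since $Y\cap\bigcap_nU_n\subseteq X_0^\ast\cap\bigcap_nU_n\subseteq Z$. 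Moreover $Q=Q_0\cup Q_1$ is a union of exactly two full $\sim$-classes (the tail classes of the constant sequences), so it is itself saturated; removing a saturated set from a saturated set leaves a saturated set, whence $Y$ is saturated. Finally $Q$ is countable and $\aleph_1$ is uncountable, so $|Y|=\aleph_1$, and by construction $Y\subseteq\cantorset\setminus Q$.

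With $Y$ in hand, Theorem~\ref{compact} immediately yields that $X=\darrow(Y)$ is a compact linearly ordered \CDH{} space of weight $|Y|=\omega_1$; its $0$-dimensionality is the basic property of the spaces $\darrow(X)$ recorded just after their definition, and weight $\omega_1$ makes it non-metrizable. I do not expect a genuine obstacle in this argument, as it is essentially a matter of verifying that the three simultaneous demands on $Y$ can be met at once. The only point requiring a moment's care is exactly that compatibility, which the \emph{saturate-then-delete-$Q$} maneuver is designed to arrange: saturation is needed to feed the hypothesis of Theorem~\ref{compact}, deletion of $Q$ is needed to land inside $\cantorset\setminus Q$, and one must confirm that these two operations neither disturb one another nor cost us the $\lambda'$-property or the cardinality $\aleph_1$.
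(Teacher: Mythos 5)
Your proposal is correct and follows exactly the paper's route: the paper derives this theorem from Theorem~\ref{compact} together with Miller's ZFC example of a $\lambda'$-set of size $\aleph_1$, leaving the \emph{saturate-then-delete-$Q$} adjustments (via Lemma~\ref{lambda}(1), the fact that subsets of $\lambda'$-sets are $\lambda'$-sets, and the countability of $\sim$-classes and of $Q$) implicit. Your write-up simply makes those routine verifications explicit.
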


The use of the $\lambda'$-sets here is necessary.
If $X\subset\cantorset\setminus Q$ is a Baire space, by using the same method as in \cite{arh-vm-cdh-cardinality} it is possible to prove that the space
$\darrow(X)$ is not {\CDH}. Also, since in the Cohen model all $\lambda$-sets have cardinality $\omega_1$ (\cite[Theorem 22]{Miller93}) we do not have examples
of compact CDH spaces of weight $\cont$ in ZFC.

\begin{question}
Is there a compact  {\CDH} space of weight $\cont$ in ZFC?
\end{question}

\begin{question}
Is there a non-metrizable {\CDH} continuum?
\end{question}

\bigskip

{\bf Acknowledgments.} The authors wish to thank S.~V.~Medvedev and the anonymous referee for commenting on the paper. They
pointed out several inaccuracies that appeared in an earlier version of the paper and helped us to improve the paper in general.

\def\cprime{$'$}
\makeatletter \renewcommand{\@biblabel}[1]{\hfill[#1]}\makeatother

\end{document}